\journal{Stochastic Processes and their Applications}
\DeclareMathOperator {\E}{E}
\DeclareMathOperator{\Var}{Var}
\DeclareMathOperator{\arcsech}{arcsech}
\newcommand{\secx}{\mathsf X}
\newcommand{\secu}{\mathsf U}
\newcommand{\secz}{\mathsf Z}
\newtheorem{proposition}{Proposition}%[section]
\newtheorem{theorem}{Theorem}%[section]
\newtheorem{corollary}{Corollary}%[section]
\newtheorem{algorithm}{Algorithm}%[section]
\newtheorem{lemma}{Lemma}
\begin{document}

\begin{frontmatter}

\title{An asymptotically optimal Bernoulli factory for certain functions that can be expressed as power series}
%\title{An asymptotically optimal Bernoulli\\ factory for certain functions that\\ can be expressed as power series} % ArXiv
%\titlerunning{Asymptotically optimal Bernoulli factory for certain functions}

\author{Luis Mendo}

\address{
Information Processing and Telecommunications Center,
Universidad Polit\'ecnica de Madrid.\\
Avenida Complutense, 30. 28040 Madrid, Spain.\\
%Tel.: +34 91 549 5700.
%Fax: +34 91 336 7350\\
E-mail: \href{mailto:luis.mendo@upm.es}{\nolinkurl{luis.mendo@upm.es}}
}

%ORCID: 0000-0001-5691-714X

\begin{abstract}
Given a sequence of independent Bernoulli variables with unknown parameter $p$, and a function $f$ expressed as a power series with non-negative coefficients that sum to at most $1$, an algorithm is presented that produces a Bernoulli variable with parameter $f(p)$. In particular, the algorithm can simulate $f(p)=p^a$, $a\in(0,1)$. For functions with a derivative growing at least as $f(p)/p$ for $p\rightarrow 0$, the average number of inputs required by the algorithm is asymptotically optimal among all simulations that are fast in the sense of Nacu and Peres. A non-randomized version of the algorithm is also given. Some extensions are discussed.
\end{abstract}

\begin{keyword}
Bernoulli factory \sep Simulation \sep Power series
\MSC 65C50
\end{keyword}

\end{frontmatter}

\section{Introduction}
\label{part: intro}

Let $\secx = (X_i)_{i \in \mathbb N}$ denote a sequence of independent, identically distributed (i.i.d.) Bernoulli random variables $X_i$ with unknown parameter $p$. A \emph{non-randomized stopping rule} on $\secx$ is a sequence of \emph{stopping functions} $\tau_1(x_1)$, $\tau_2(x_1, x_2)$, \ldots, $\tau_i (x_1, \ldots, x_i)$, \ldots with $\tau_i: \{0,1\}^i \to \{0,1\}$. Given a realization $x_1,x_2,\ldots$ of $\secx$, the \emph{stopping time} $N$ is the smallest integer $i$ for which $\tau_i(x_1, \ldots, x_i)$ equals $1$, or infinity if this does not occur. $N$ is assumed to be finite almost surely; that is, for almost all realizations of $\secx$ at least one of the stopping functions takes the value $1$. 

A \emph{randomized stopping rule} uses, in addition to $\secx$, a \emph{randomizing sequence} $\secu = (U_i)_{i \in \mathbb N}$ of independent random variables with uniform distribution on $(0,1$). The stopping function $\tau_i$ depends on $x_1,u_1; x_2,u_2;\ldots;x_i,u_i$, where each $u_j$ represents the value taken by the random variable $U_j$; and is assumed to be measurable. The stopping time $N$ is defined as before, and is again assumed to be finite almost surely.

Another possible definition of a randomizing stopping rule would be to specify that at each $i$, given $X_1=x_1,\ldots,X_i=x_i$, there is a certain probability of stopping that depends on $x_1,\ldots,x_i$. This corresponds to the above definition if the output of $\tau_i$ is obtained from comparing $U_i$ with a threshold that depends on $x_1,\ldots,x_i$. Thus the definition based on the auxiliary sequence $\secu$ captures all the randomness that can be effected by a randomized stopping rule (and is more convenient for the purposes of this paper).
%Observe also that a non-randomized stopping rule can be thought of as a realization of the randomizing sequence $\secu$.

Let $f: S \to (0,1)$ denote a function defined on a set $S \subseteq (0,1)$, and let $\secx$ be a sequence of independent Bernoulli variables with parameter $p \in S$. A \emph{non-randomized Bernoulli factory} of $f$ based on $\secx$ is an algorithm that, using values from the sequence $\secx$ as inputs, generates a Bernoulli variable $Y$ with parameter $f(p)$. Specifically, the number $N$ of required inputs is a stopping time on $\secx$ dictated by a non-randomized stopping rule; and the output value $Y$ depends on $X_1, \ldots, X_N$.

A \emph{randomized Bernoulli factory} uses, in addition to the sequence $\secx$, an auxiliary sequence $\secu$ of independent random variables uniformly distributed on $(0,1)$. Specifically, $N$ is given by a randomized stopping rule on $\secx$ with $\secu$ as randomizing sequence. The output $Y$ is also possibly randomized, that is, it depends on $X_1, \ldots, X_N$ and $U_1, \ldots, U_N$. Specifically, there exists a sequence of functions $\gamma_1, \gamma_2, \ldots$ such that for $N = n$ and for $X_1=x_1$, $U_1 = u_1$; \ldots; $X_n=x_n$, $U_n = u_n$ the output is given as $\gamma_n(x_1,u_1; \ldots; x_n,u_n)$. These functions are assumed to be measurable.

The use of the term ``randomized'' in the above definitions of randomized stopping rules or Bernoulli factories refers to the fact that the stopping time $N$ and the factory output $Y$ are random even if conditioned on the input sequence $\secx$, due to the additional source of randomness represented by $\secu$. In the following, a Bernoulli factory will also be referred to as a \emph{simulation}.

One of the earliest references of a Bernoulli factory, for the specific case $f(p)=1/2$, appears in a work by von Neumann \cite{VonNeumann51}.
% : take pairs of inputs until the two members of a pair have different values, and then output the first value of that pair.
For general $f$, Keane and O'Brien \cite{Keane94} give a necessary and sufficient condition for a simulation of $f$ to be possible, namely that the function either is constant, or is continuous and satisfies a certain polynomial bound.
% polynomially bounded away from $0$ and $1$,
%in the sense that there exists some positive integer $n$ such that $\min\{f(p),1-f(p)\} \geq \min\{p^n, (1-p)^n\} $ for all $p \in S$.
Nacu and Peres \cite{Nacu05} define a non-randomized simulation to be \emph{fast} if the distribution of $N$ has an exponential tail, that is, for any $p \in S$ there exist values $A>0$, $\beta<1$ (which may depend on $p$) such that
\begin{equation}
\label{eq: exp tail}
\Pr[N>n] \leq A \beta^n.
\end{equation}
The authors prove that a fast simulation exists for any $f$ real analytic on any closed interval contained in $(0,1)$. In this paper, the definition  will be extended to randomized simulations, which will be considered fast if they satisfy \eqref{eq: exp tail}.

Several works on Bernoulli factories present simulation algorithms for specific functions. Considerable attention has been given to $f(p) = \min\{2p, 1-2\epsilon\}$, $\epsilon>0$, which is an important building block for simulating other functions \cite{Nacu05, Latuszynski11}; and to linear functions $f(p) = cp$, $c>1$ defined on a suitable set $S \subset (0,1/c)$; see the work by Huber \cite{Huber16, Huber17}.

A crucial parameter of a Bernoulli factory is $\E[N]$, that is, how many inputs $X_i$ are required on average to generate a sample of $Y$. In applications, observing the variables $X_i$ is usually costly, and thus $\E[N]$ should be as small as possible. For randomized Bernoulli factories, the auxiliary random variables $U_i$ are assumed to be cost-free.

This paper deals mainly with functions of the form $f(p) = 1-\sum_{k=1}^\infty c_k (1-p)^k$ where the coefficients $c_k$ are non-negative and sum to $1$.
% although extensions to this class are discussed at the end of the paper.
A randomized algorithm to simulate any such function is presented in Section \ref{part: algo, E van}. The algorithm is shown to be fast in the sense of \eqref{eq: exp tail}, and its average number of inputs, $\E[N]$, is computed. The algorithm can be particularized to functions $f(p)=p^a$, $a \in(0,1)$. For $a=1/2$ the algorithm is similar to that given by W{\"a}stlund \cite{Wastlund99}; and the presented results affirmatively answer question 1 from \cite{Nacu05}, i.e.\ establish that $\sqrt p$ can be simulated with finite $\E[N]$.

An interesting subclass of functions is formed by those that, in addition to having a power series expression as above, satisfy the following two conditions: $f(p)/p \rightarrow \infty$ as $p \rightarrow 0$, and the derivative $f'(p)$ asymptotically grows at least as fast as $f(p)/p$. In particular, this includes all functions that behave asymptotically like $b p^a$, $a\in (0,1)$, $b \in (0,\infty)$. For these functions, it will be seen that any fast simulation algorithm has an average number of inputs $\E[N]$ that grows without bound as $p \rightarrow 0$. Therefore it is important to analyse the asymptotic rate of growth of $\E[N]$. This analysis is presented in Section \ref{part: as opt}. The results show that the proposed algorithm is asymptotically optimal for the mentioned subclass of functions, in the sense that for any other fast algorithm $\E[N]$ grows at least as fast with $p$.

A non-randomized version of the proposed algorithm is given in Section \ref{part: algo non-rand}, and is also shown to be fast. Section \ref{part: extensions} discusses some extensions of the algorithms to cover a broader range of functions. Section \ref{part: concl, future} presents conclusions and open problems. Section \ref{part: proofs} contains proofs to all results.

The following notation is used throughout the paper. $x^{(m)}$ represents the rising factorial $x(x+1)\cdots(x+m-1)$ for $m \in \mathbb N$, and $x^{(0)}=0$. For $m \in \mathbb N$, $m!!$ denotes the double factorial, that is, $m(m-2)(m-4)\cdots 2$ if $m$ is even, or $m(m-2)(m-4)\cdots 1$ if $m$ is odd. Given two positive functions $f_1$ and $f_2$, $f_1(x)$ is said to be $\Omega(f_2(x))$ for $x \rightarrow x_0$ if there are constants $C, \delta > 0$ such that $f_1(x) \geq C f_2(x)$ for all $x$ such that $|x-x_0|<\delta$.
%(that is, $f_1(p)/f_2(p)$ is bounded away from $0$ asymptotically).

\section{Simulation algorithm. Average number of inputs}
\label{part: algo, E van}

Consider an i.i.d.\ sequence $\secx$ of random variables $X_i$ that take the value $1$ with probability $p$ and $0$ with probability $1-p$. Let $f:(0,1)\to(0,1)$ be a function that can be expressed as a power series
\begin{equation}
\label{eq: f 1-p}
f(p) = 1-\sum_{k=1}^\infty c_k (1-p)^k
\end{equation}
with
\begin{align}
\label{eq: c k non-neg}
c_k & \geq 0, \\
\label{eq: sum c k}
\sum_{k=1}^\infty c_k & = 1.
\end{align}
Note that this implies that $f$ is differentiable, and $\lim_{p \rightarrow 0} f(p) = 0$, $\lim_{p \rightarrow 1} f(p) = 1$.

The randomized algorithm to be presented yields a Bernoulli random variable $Y$ with parameter $f(p)$. It takes as inputs a number of random variables from the sequence $\secx$, as well as from an auxiliary sequence $\secu$ of i.i.d.\ random variables $U_i$ uniformly distributed on $(0,1)$ and independent from the $X_i$ variables. The algorithm makes use of coefficients $d_k$ computed from $c_k$ as follows:
\begin{equation}
\label{eq: d k c k}
d_k = \frac{c_k}{1-\sum_{j=1}^{k-1} c_j}.
\end{equation}
From \eqref{eq: c k non-neg} and \eqref{eq: sum c k} it stems that $0 \leq d_k \leq 1$. If the number of non-zero coefficients $c_k$ is finite, i.e.\ if there exists $K$ such that $c_K > 0$ and $c_k = 0$ for $k>K$, \eqref{eq: d k c k} gives $d_K = 1$; and for $k>K$ the coefficient $d_k$ is not defined (and is not necessary, as will be seen).

\begin{algorithm}
\label{algo: Bernoulli factory rand}
Let $f$ be given by \eqref{eq: f 1-p}--\eqref{eq: sum c k},
% *!* --
and let $d_k$ be defined by \eqref{eq: d k c k}. The input of the algorithm is a sequence $\secx$ of i.i.d.\ Bernoulli random variables. The output is a Bernoulli random variable $Y$.
\begin{enumerate}
\item
\label{step: algo rand, start}
Set $i = 1$.
\item
\label{step: algo rand, loop}
\label{step: algo rand, input}
Take one input $X_i$.
\item
\label{step: algo rand, U n V n}
Produce $U_i$ uniform on $(0,1)$. Let $V_i=1$ if $U_i < d_i$ or $V_i=0$ otherwise.
\item
\label{step: algo rand, output}
\label{step: algo rand, end}
If $V_i$ or $X_i$ are $1$, output $Y = X_i$ and finish. Else increase $i$ and go back to step \ref{step: algo rand, loop}.
\end{enumerate}
\end{algorithm}

The idea of this algorithm is similar to that presented by W{\"a}stlund \cite{Wastlund99} to simulate $f(p) = \sqrt{p}$, namely, decompose the event $Y=0$ as an infinite sum of mutually exclusive events, each with probability $c_k (1-p)^k$. However, there are two differences here. First, the referenced paper treats the $c_k$ and $(1-p)^k$ parts separately. Namely, an auxiliary random variable $L$ is first generated with $\Pr[L=k] = c_k$, $k \in \mathbb N$. This variable represents the amount of inputs $X_i$ that need to be taken. Then, $(1-p)^L$ is simulated using the product $\prod_{i=1}^L (1-X_i)$. On the other hand, Algorithm \ref{algo: Bernoulli factory rand} reduces the number of required inputs $X_i$ by stopping as soon as one of the $X_i$ variables is $1$. This can be done because in that case the above product is $0$ irrespective of the values of the remaining $X_i$ variables.

The second difference is that Algorithm \ref{algo: Bernoulli factory rand} uses auxiliary Bernoulli variables $V_i$ with respective parameters $d_i$, instead of a random variable $L$ with the distribution given by coefficients $c_k$. The latter approach, used in \cite{Wastlund99}, was probably motivated by the fact that for $f(p) = \sqrt{p}$ the coefficients $c_k$ in \eqref{eq: f 1-p} are
\begin{equation}
\label{eq: c_k sqrt}
c_k = \frac{1}{2^{2k-1} k} \binom{2k-2}{k-1}
\end{equation}
and thus simulating $L$ is particularly easy, as it lends itself to a simple probabilistic interpretation. Specifically, if a fair coin is flipped until the total number of tails exceeds the total number of heads, the probability that this happens after $2k-1$ steps is precisely \eqref{eq: c_k sqrt}. For other functions it is still possible to simulate $L$ from fair coin flips, or from a uniform random variable, as long as conditions \eqref{eq: c k non-neg} and \eqref{eq: sum c k} are satisfied; but a simple probabilistic experiment analogous to that for $f(p) = \sqrt p$ may not exist. The Bernoulli random variables $V_i$ provide an alternative, which can also be used for any coefficients $c_k$ that satisfy the indicated conditions. Effectively, each $d_k$ represents the conditional probability that $L = k$ given that $L \geq k$. The following proposition clarifies this.
\begin{proposition}
\label{prop: c k d k}
The coefficients $d_k$ defined by \eqref{eq: d k c k} satisfy
\begin{equation}
\label{eq: c k d k}
c_k = d_k \prod_{j=1}^{k-1} (1-d_j).
\end{equation}
\end{proposition}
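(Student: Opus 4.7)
The identity is essentially algebraic, and I would attack it by recognizing that the partial sums of the $c_k$'s play a central role. Define $S_k = \sum_{j=1}^{k-1} c_j$, so that $S_1 = 0$ and $S_{j+1} = S_j + c_j$. In this notation the defining equation \eqref{eq: d k c k} reads $d_k = c_k/(1-S_k)$, so the goal \eqref{eq: c k d k} is equivalent to showing
\begin{equation*}
\prod_{j=1}^{k-1}(1-d_j) = 1 - S_k,
\end{equation*}
after which multiplying both sides by $d_k = c_k/(1-S_k)$ immediately gives the claim.

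The plan is then to evaluate the product by telescoping. A direct computation gives
\begin{equation*}
1 - d_j = 1 - \frac{c_j}{1-S_j} = \frac{1-S_j-c_j}{1-S_j} = \frac{1-S_{j+1}}{1-S_j},
\end{equation*}
so the product from $j=1$ to $k-1$ collapses to $(1-S_k)/(1-S_1) = 1 - S_k$, since $S_1 = 0$. Combined with the previous paragraph, this yields \eqref{eq: c k d k}.

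One minor point to address is well-definedness: we need $1 - S_j > 0$ for each $j$ appearing in the product (and for the index $k$ under consideration). This is immediate from \eqref{eq: c k non-neg} and \eqref{eq: sum c k} as long as $c_k > 0$, which is the only case in which $d_k$ is defined in the first place; and if $c_j = 0$ for some $j<k$, then $d_j = 0$ and that factor contributes trivially to the telescoping identity. There is no serious obstacle here: the argument is a one-line telescoping once the reformulation in terms of $1 - S_k$ is made. An equivalent alternative would be a short induction on $k$, using the recursion $1 - S_{k+1} = (1-S_k)(1-d_k)$ as the inductive step, but the telescoping presentation is more transparent.
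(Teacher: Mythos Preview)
Your proof is correct and is essentially the same telescoping argument as the paper's: the paper rewrites $1-\sum_{j=1}^{k-1}c_j$ as the tail sum $\sum_{j=k}^\infty c_j$ (using \eqref{eq: sum c k}) and telescopes the product of ratios $\frac{\sum_{j\ge i+1}c_j}{\sum_{j\ge i}c_j}$, whereas you telescope the equivalent ratios $\frac{1-S_{j+1}}{1-S_j}$ directly. The only minor distinction is that your version does not appeal to \eqref{eq: sum c k}, so it works verbatim whenever the $d_k$ are well defined.
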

Additionally, this interpretation of $d_k$ as a conditional probability makes it clear that if $d_K = 1$ for some $K$ (which occurs if the series in \eqref{eq: f 1-p} is finite with $c_K > 0$, $c_k=0$ for $k > K$) it is unnecessary to define coefficients $d_k$ for $k>K$.

The algorithm can also be phrased as a particular case of the reverse-time martingale approach of \L{}atuszy{\'n}ski, Kosmidis, Papaspiliopoulos and Roberts with random bounds \cite[algorithm 3]{Latuszynski11}. Specifically, from \eqref{eq: f 1-p} it is possible to obtain monotone sequences of random upper bounds and lower bounds that depend on the observed inputs, such that the sufficient conditions for the referenced algorithm are satisfied. This approach has the additional advantage that condition \eqref{eq: sum c k} is not required. Note, however, that this restriction of Algorithm \ref{algo: Bernoulli factory rand} is immaterial, because the coefficients can always be scaled to sum $1$ and then the output $Y$ can be multiplied by an independent Bernoulli variable with parameter equal to the desired sum (see Section \ref{part: extensions}).

\begin{theorem}
\label{teo: Pr Y}
Given a sequence $\secx$ of i.i.d.\ Bernoulli random variables with parameter $p \in (0,1)$, a function $f$ of the form \eqref{eq: f 1-p}--\eqref{eq: sum c k}, % *!* --
and coefficients $d_k$ computed from \eqref{eq: d k c k}, the Bernoulli random variable $Y$ produced by Algorithm \ref{algo: Bernoulli factory rand} has $\Pr[Y=1] = f(p)$.
\end{theorem}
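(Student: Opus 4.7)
The proof plan is to compute $\Pr[Y=1]$ by a direct event decomposition, using Proposition \ref{prop: c k d k} to relate the product of $d_i$-factors to the target coefficients $c_k$.

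First I would describe the one-step structure of the algorithm. At iteration $i$, conditional on having reached that step, the algorithm makes $X_i$ and $V_i$ independently, terminates unless both are $0$, and outputs $Y=X_i$. Thus the conditional probability of proceeding to step $i+1$ is $(1-p)(1-d_i)$, and the probability of \emph{reaching} step $i$ at all is $(1-p)^{i-1}\prod_{j=1}^{i-1}(1-d_j)$. As a sanity check, because $p\in(0,1)$, the probability of reaching step $i$ is bounded by $(1-p)^{i-1}\to 0$, so the algorithm terminates almost surely; in particular $\Pr[Y=0]+\Pr[Y=1]=1$, and it suffices to compute either one.

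I would then compute $\Pr[Y=0]$, which is slightly cleaner. The event $\{Y=0\}$ decomposes into the mutually exclusive events $A_i=\{\text{algorithm stops at step }i\text{ with }X_i=0\}$ for $i\geq 1$. The event $A_i$ requires $X_j=0$ and $V_j=0$ for $j<i$, together with $X_i=0$ and $V_i=1$. By independence,
\begin{equation*}
\Pr[A_i]=\Bigl(\prod_{j=1}^{i-1}(1-p)(1-d_j)\Bigr)(1-p)\,d_i
= (1-p)^{i}\,d_i\prod_{j=1}^{i-1}(1-d_j).
\end{equation*}
Applying Proposition \ref{prop: c k d k} immediately identifies $d_i\prod_{j=1}^{i-1}(1-d_j)=c_i$, so summation yields
\begin{equation*}
\Pr[Y=0]=\sum_{i=1}^{\infty} c_i(1-p)^{i},
\end{equation*}
which by \eqref{eq: f 1-p} equals $1-f(p)$. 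Combined with almost-sure termination, this gives $\Pr[Y=1]=f(p)$.

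There is essentially no serious obstacle: the argument is a bookkeeping of independent Bernoulli trials, and all the real content is absorbed into Proposition \ref{prop: c k d k}. The one point that merits a brief remark is the handling of the degenerate case in which $c_K>0$ and $c_k=0$ for $k>K$: then $d_K=1$ and the algorithm stops by step $K$ deterministically, so the sum over $i$ truncates at $K$ without affecting either the identity $c_i=d_i\prod_{j<i}(1-d_j)$ for $i\leq K$ or the conclusion.
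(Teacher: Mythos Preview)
Your proof is correct and follows essentially the same approach as the paper's: decompose $\{Y=0\}$ according to the stopping time, compute $\Pr[N=n,\,Y=0]=(1-p)^n d_n\prod_{j<n}(1-d_j)$ by independence, invoke Proposition~\ref{prop: c k d k} to identify this as $c_n(1-p)^n$, and sum. Your version is in fact slightly more careful, since you explicitly note that $(1-p)^{i-1}\to 0$ forces almost-sure termination, which justifies $\Pr[Y=1]=1-\Pr[Y=0]$; the paper leaves this implicit.
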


Algorithm \ref{algo: Bernoulli factory rand} takes a new input for each iteration $i$. Thus the number of used inputs, $N$, coincides with the value of $i$ when the algorithm finishes.

\begin{theorem}
\label{teo: E[N], exp tail}
For $f$ given by \eqref{eq: f 1-p}--\eqref{eq: sum c k} and $p \in (0,1)$, % *!* --
the average number of inputs required by Algorithm \ref{algo: Bernoulli factory rand} is
\begin{equation}
\E[N] = \frac{f(p)}{p}.
\end{equation}
In addition, the algorithm is fast in the sense of \eqref{eq: exp tail}.
\end{theorem}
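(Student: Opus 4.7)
The plan is to exploit the fact that each iteration of Algorithm \ref{algo: Bernoulli factory rand} is passed (i.e.\ the loop continues) if and only if $V_i=0$ and $X_i=0$, and that the pairs $(V_i,X_i)$ are independent across $i$ with $\Pr[V_i=0]=1-d_i$ and $\Pr[X_i=0]=1-p$. This immediately gives
\begin{equation*}
\Pr[N\geq n] = \prod_{i=1}^{n-1}(1-d_i)(1-p) = (1-p)^{n-1}\prod_{i=1}^{n-1}(1-d_i),
\end{equation*}
since not stopping before iteration $n$ means surviving iterations $1,\ldots,n-1$.

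To compute $\E[N]$, I would use $\E[N]=\sum_{n\geq 1}\Pr[N\geq n]$ together with Proposition \ref{prop: c k d k}. The key identity is the telescoping
\begin{equation*}
\prod_{i=1}^{m-1}(1-d_i) - \prod_{i=1}^{m}(1-d_i) = d_m\prod_{i=1}^{m-1}(1-d_i) = c_m,
\end{equation*}
which upon summing yields $\prod_{i=1}^{m}(1-d_i)=1-\sum_{k=1}^{m}c_k=\sum_{k=m+1}^{\infty}c_k$ (here I am using \eqref{eq: sum c k}). If coefficients $d_k$ become undefined at some point $K$ because $d_K=1$, the products terminate naturally and the identity still holds with $c_k=0$ for $k>K$. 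Substituting into the series for $\E[N]$, reindexing with $m=n-1$, and swapping the order of summation gives
\begin{equation*}
\E[N]=\sum_{m=0}^{\infty}(1-p)^{m}\sum_{k=m+1}^{\infty}c_k=\sum_{k=1}^{\infty}c_k\sum_{m=0}^{k-1}(1-p)^m=\sum_{k=1}^{\infty}c_k\,\frac{1-(1-p)^k}{p},
\end{equation*}
and recognizing $\sum_k c_k=1$ and $\sum_k c_k(1-p)^k=1-f(p)$ from \eqref{eq: f 1-p} turns this into $f(p)/p$, as required. The interchange of summation is legitimate because all terms are non-negative.

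The exponential tail is essentially a free byproduct of the preceding analysis: bounding $\prod_{i=1}^{n}(1-d_i)\leq 1$ in $\Pr[N>n]=(1-p)^{n}\prod_{i=1}^{n}(1-d_i)$ gives $\Pr[N>n]\leq(1-p)^n$, so \eqref{eq: exp tail} holds with $A=1$ and $\beta=1-p<1$ for every $p\in(0,1)$. I do not anticipate any real obstacle; the most delicate point is justifying the telescoping identity uniformly, including in the terminating case $d_K=1$, and making sure the Fubini-type interchange is handled via non-negativity rather than absolute convergence.
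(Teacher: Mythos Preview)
Your proof is correct and follows essentially the same approach as the paper's: both derive $\Pr[N\geq n]=(1-p)^{n-1}\prod_{i=1}^{n-1}(1-d_i)$, identify $\prod_{i=1}^{n-1}(1-d_i)=1-\sum_{j=1}^{n-1}c_j$ (the paper via Proposition~\ref{prop: c k d k} and \eqref{eq: d k c k}, you via telescoping, which amounts to the same thing), swap the resulting double sum using non-negativity, and recognize $f(p)/p$; the exponential-tail argument is identical. The only cosmetic difference is that after the swap you evaluate a finite geometric sum $\sum_{m=0}^{k-1}(1-p)^m$, whereas the paper evaluates the complementary infinite tail.
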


It is interesting to consider the case $f(p) = p^a$, $a \in (0,1)$. This can be expressed in the form \eqref{eq: f 1-p} with
\begin{equation}
\label{eq: coef p a}
c_k = \frac{(1-a)^{(k-1)} a}{k!},
\end{equation}
from which $d_k = a/k$. Algorithm \ref{algo: Bernoulli factory rand} can be applied, and $\E[N] = p^{a-1}$. In particular, $f(p) = \sqrt{p}$ can be simulated with $\E[N]=1/\sqrt{p}$. This solves \cite[question 1]{Nacu05}, which asks if there is an algorithm that simulates $\sqrt{p}$ on $(0,1)$ for which the number of required inputs has finite expectation for all $p$.

\section{Asymptotic optimality}
\label{part: as opt}

A natural question is whether the average number of inputs required by Algorithm \ref{algo: Bernoulli factory rand} can be improved by some other algorithm. The following proposition and theorem are useful steps towards the answer.

\begin{proposition}
\label{prop: E[N] cont p}
Given an open set $S \subseteq (0,1)$, consider a function $f: S \to (0,1)$ and a (possibly randomized) Bernoulli factory for $f$ that is fast, as defined by \eqref{eq: exp tail}. Then $\E[N]$ is finite and is a continuous function of $p \in S$.
\end{proposition}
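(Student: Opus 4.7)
\textbf{Finiteness} of $\E[N]$ at any $p\in S$ follows immediately from the hypothesis: the tail-sum formula gives $\E_p[N]=\sum_{n=0}^\infty\Pr_p[N>n]\leq\sum_{n=0}^\infty A\beta^n=A/(1-\beta)<\infty$, so the substantive content of the proposition is the continuity of $p\mapsto\E_p[N]$.

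My plan for continuity at a fixed $p_0\in S$ is to show that the tail series $\sum_n\Pr_p[N>n]$ converges uniformly on some closed neighbourhood $[p_0-\delta,p_0+\delta]\subset S$; each summand is a polynomial in $p$, hence continuous, and the Weierstrass M-test will then yield continuity of $\E_p[N]$ at $p_0$. To obtain uniform convergence I would upgrade the pointwise exponential tail at $p_0$ to a uniform one on a small neighbourhood. This upgrade rests on two structural facts about $a_n(p):=\Pr_p[N>n]$: the event $\{N>n\}$ is determined by $X_1,\ldots,X_n$ and $U_1,\ldots,U_n$, and, since the $U_i$ are independent of $p$, integrating them out produces the Bernstein representation
\begin{equation*}
a_n(p)=\sum_{x\in\{0,1\}^n}\pi_n(x)\,p^{|x|}(1-p)^{n-|x|},
\end{equation*}
with coefficients $\pi_n(x)\in[0,1]$ that do not depend on $p$; in particular, all of them are non-negative.

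The pointwise-to-uniform conversion then relies on the elementary inequality
\begin{equation*}
p^k(1-p)^{n-k}\leq p_0^k(1-p_0)^{n-k}\,\rho(p,p_0)^n,\qquad \rho(p,p_0):=\max\left(\frac{p}{p_0},\,\frac{1-p}{1-p_0}\right),
\end{equation*}
which, applied term by term under the non-negative weights $\pi_n(x)$, yields $a_n(p)\leq\rho(p,p_0)^n\,a_n(p_0)\leq A\bigl(\beta\,\rho(p,p_0)\bigr)^n$, where $A,\beta$ are the constants from \eqref{eq: exp tail} at $p_0$. Since $\rho(p_0,p_0)=1$ and $\rho(\cdot,p_0)$ is continuous, I can choose $\delta>0$ small enough that $\beta\,\rho(p,p_0)\leq\beta'<1$ on $[p_0-\delta,p_0+\delta]$, which delivers the desired uniform exponential tail bound and completes the argument. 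The main obstacle is precisely this upgrade from pointwise to uniform exponential control, and the non-negativity of the Bernstein coefficients is what makes it possible; a naïve bound based on polynomial Markov-type inequalities would produce only polynomial (not exponential) control on $|a_n(p)-a_n(p_0)|$ and would not suffice.
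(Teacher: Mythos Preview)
Your proof is correct and follows essentially the same route as the paper: both recognize that $\Pr_p[N>n]$ is a Bernstein polynomial in $p$ with non-negative coefficients (the paper derives this via its equation \eqref{eq: pi}), upgrade the pointwise exponential tail to a locally uniform one on a closed subinterval, and conclude continuity from uniform convergence of the tail-sum series. The only difference is that the paper invokes \cite[proposition~21]{Nacu05} as a black box for the pointwise-to-uniform step, whereas you supply an explicit likelihood-ratio argument; your bound $a_n(p)\le\rho(p,p_0)^n\,a_n(p_0)$ is in fact essentially the content of that proposition, so your version is a self-contained rendering of the same idea.
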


This proposition, which  will be used to prove Theorem \ref{teo: E[N] bound}, can be given an intuitive interpretation as follows. Since the stopping functions only depend on the values of the sequences $\secx$ and $\secu$ and the distribution of those values varies smoothly (or is constant) with $p$, it seems reasonable to expect $\E[N]$ to be a continuous function of $p$. As established by the proposition, fastness of the Bernoulli factory is indeed sufficient to ensure this.

\begin{theorem}
\label{teo: E[N] bound}
Consider an open set $S \subseteq (0,1)$, a differentiable function $f: S \to (0,1)$ and a (possibly randomized) Bernoulli factory for $f$ that is fast in the sense of \eqref{eq: exp tail}. Then
\begin{equation}
\label{eq: E[N] bound}
\E[N] \geq (f'(p))^2 \frac{p(1-p)}{f(p)(1-f(p))}.
\end{equation}
\end{theorem}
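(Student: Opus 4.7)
The plan is to establish a Cramér-Rao-type information inequality, viewing $Y$ as an unbiased estimator of $f(p)$ built from the observations $X_1,\ldots,X_N$. Proposition \ref{prop: E[N] cont p} already guarantees that $\E[N]$ is finite throughout $S$, which makes Wald-type identities available. Set $S_N = X_1+\cdots+X_N$ and define the centered sum
\[
Z = S_N - N p.
\]
With respect to the natural filtration of $(X_i,U_i)$, each $X_{n+1}$ is independent of the past with mean $p$, so $S_n - np$ is a martingale and $N$ is a stopping time with $\E[N]<\infty$. Wald's first and second identities therefore yield $\E[Z]=0$ and $\Var(Z)=\E[Z^2]=p(1-p)\,\E[N]$.

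The core of the argument is the score identity
\[
f'(p) = \frac{\E[YZ]}{p(1-p)}.
\]
In the non-randomized case, let $\mathcal{S}_n$ be the set of length-$n$ input sequences on which the algorithm stops and outputs $1$; then $\E_p[Y]=\sum_{n}\sum_{(x_1,\ldots,x_n)\in\mathcal{S}_n}p^{s}(1-p)^{n-s}$ with $s=x_1+\cdots+x_n$. Since $\frac{d}{dp}\bigl[p^s(1-p)^{n-s}\bigr]=\frac{s-np}{p(1-p)}\,p^s(1-p)^{n-s}$, termwise differentiation reproduces precisely $\E[YZ]/(p(1-p))$. To justify the interchange, I would use fastness: on a sufficiently small neighborhood of $p$ contained in $S$ one obtains a uniform bound of the form $\Pr_{p'}[N=n]\leq A\beta^n$, so each derivative term is dominated by $Cn\beta^n$ and the Weierstrass M-test applies. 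The randomized case reduces to this one by conditioning on $\secu$, whose distribution does not depend on $p$, so the score function involves only the $X_i$.

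Finally, since $\E[Z]=0$, the Cauchy-Schwarz inequality applied to the centered variable $Y-f(p)$ gives
\[
\bigl(\E[YZ]\bigr)^2 = \bigl(\E[(Y-f(p))Z]\bigr)^2 \leq \E\bigl[(Y-f(p))^2\bigr]\,\E[Z^2] = f(p)\bigl(1-f(p)\bigr)\,p(1-p)\,\E[N].
\]
Substituting $\E[YZ]=p(1-p)\,f'(p)$ and dividing both sides by $p(1-p)\,f(p)(1-f(p))$ yields \eqref{eq: E[N] bound}. The principal technical hurdle is the justification of term-by-term differentiation of the power-series expression for $\E_p[Y]$ (and the integral counterpart in the randomized case); this is exactly where the fast-simulation hypothesis is indispensable, as it supplies the exponential tail needed for a uniform-convergence / dominated-convergence argument.
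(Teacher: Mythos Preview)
Your argument is correct and rests on the same information-inequality idea as the paper's, but the execution is more elementary. The paper appeals to Wolfowitz's sequential Cram\'er--Rao bound: it first recodes the randomized factory as a non-randomized procedure on the variables $Z_i=X_i+U_i$ (so as to have an explicit density on $(0,2)$), and then verifies Wolfowitz's five regularity conditions one by one; the hardest of these, uniform convergence of $\sum_n \partial\varphi_{n,1}/\partial p$, is isolated as a separate lemma and is where the exponential-tail hypothesis enters. You bypass both the $Z_i$ device and the Wolfowitz citation by building the inequality from scratch: Wald's identities give $\E[S_N-Np]=0$ and $\E[(S_N-Np)^2]=p(1-p)\,\E[N]$, termwise differentiation of $\sum_n\varphi_{n,1}(p)$ gives the score identity $\E[Y(S_N-Np)]=p(1-p)f'(p)$, and Cauchy--Schwarz closes the loop. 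The genuine technical work---justifying the derivative-sum interchange via a uniform bound $\Pr_{p'}[N=n]\le A\beta^n$ on a neighborhood of $p$---is the same in both proofs and relies on the same uniformization of the exponential tail (Nacu--Peres, Proposition~21). What the paper's route buys is that, once the regularity conditions are checked, the inequality is quoted from the literature; what your route buys is a self-contained, one-page argument that does not send the reader to \cite{Wolfowitz47} and that handles the randomization more lightly (conditioning on $\secu$ rather than absorbing it into a density).
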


The main idea in the proof of this theorem is as follows. Given an arbitrary, possibly randomized factory for $f$ that uses the input sequence $\secx$ with parameter $p$, the output $Y$ can be seen as a sequential estimator of $f(p)$. Wolfowitz's extension of the Cram{\'e}r-Rao bound to sequential estimators \cite{Wolfowitz47} can be applied to $Y$. This sets a lower bound on $\Var[Y]$ that depends on $\E[N]$. Comparing with the actual variance gives the claimed lower bound on $\E[N]$.

The proof technique has some similarities with those used by Huber \cite{Huber16, Huber17}. Specifically, the method employed in \cite{Huber16} to establish a lower bound on $\E[N]$ for linear functions $f(p) = cp$, $c>1$, $p \in (0,(1-\epsilon)/c)$ is also based on considering the Bernoulli factory as an estimator of $f(p)$; but instead of the Cram{\'e}r-Rao inequality, a different bound is used which relates $\E[N]$ to the confidence level for a given interval. In \cite{Huber17} the Cram{\'e}r-Rao inequality is used, albeit informally, to provide evidence for a lower bound on $\E[N]$ for linear functions.

In order to compare Algorithm \ref{algo: Bernoulli factory rand} with others in terms of $\E[N]$, the most interesting case is that of functions $f$ for which this algorithm gives $\E[N] \rightarrow \infty$ as $p \rightarrow 0$; that is, functions of the form \eqref{eq: f 1-p}--\eqref{eq: sum c k} with
\begin{equation}
\label{eq: cond f(p)}
\lim_{p \rightarrow 0} \frac{f(p)}{p} = \infty.
\end{equation}
In this case, since the average number of inputs used by Algorithm \ref{algo: Bernoulli factory rand} grows without bound, it is important to know if the growth rate could be reduced by using some other algorithm. As will be established by Theorem \ref{teo: E[N] asympt opt}, for a certain subclass of these functions the average number of inputs required by any fast algorithm increases, as $p \rightarrow 0$, at least as fast as it does with Algorithm \ref{algo: Bernoulli factory rand}, which is thus asymptotically optimal.

Consider the class of functions $f: (0,1) \to (0,1)$ given by \eqref{eq: f 1-p}--\eqref{eq: sum c k} that satisfy \eqref{eq: cond f(p)} and 
\begin{equation}
\label{eq: cond f'(p)}
f'(p) = \Omega(f(p)/p) \quad \text{ for } p \rightarrow 0.
\end{equation}
Conditions \eqref{eq: cond f(p)} and \eqref{eq: cond f'(p)} mean that, asymptotically, $f(p)$ increases faster than $p$ and $f'(p)$ increases at least as fast as $f(p)/p$. In particular, they are fulfilled by any differentiable function $f$ such that
\begin{equation}
\label{eq: cond f a b}
\lim_{p \rightarrow 0} \frac{f(p)}{p^a} = b \quad \text{ for some } a \in (0,1),\ b \in (0,\infty).
\end{equation}
Indeed, it is clear that \eqref{eq: cond f(p)} holds if \eqref{eq: cond f a b} does. As for \eqref{eq: cond f'(p)}, observe that \eqref{eq: cond f a b} implies $\lim_{p \rightarrow 0} f(p) = 0$, and thus by L'H\^opital's rule
\begin{equation}
\label{eq: cond f a b 2}
\lim_{p \rightarrow 0} f'(p) \, p^{1-a} = ab.
\end{equation}
Dividing \eqref{eq: cond f a b 2} by \eqref{eq: cond f a b} it is seen that $\lim_{p \rightarrow 0} f'(p) p / f(p) = a$, which implies \eqref{eq: cond f'(p)}.

Some examples of functions of the form \eqref{eq: f 1-p}--\eqref{eq: sum c k} for which \eqref{eq: cond f(p)} and \eqref{eq: cond f'(p)} hold are given by the next proposition.
\begin{proposition}
\label{prop: ex}
The following functions can be expressed as in \eqref{eq: f 1-p}--\eqref{eq: sum c k} and satisfy \eqref{eq: cond f(p)} and \eqref{eq: cond f'(p)}:
\begin{align}
\label{eq: p a}
f(p) &= p^a \quad \text{for } a \in (0,1) \\
\label{eq: ex frac sqrt}
f(p) &= \frac{2 \sqrt p}{1+\sqrt p} \\
\label{eq: ex log sqrt}
f(p) &= \log_2(1+\sqrt p) \\
\label{eq: ex exp sqrt}
f(p) &= \frac{1-e^{-\sqrt{p}}}{1-e^{-1}}\\
\label{eq: ex p log p}
f(p) &= p(1-\log p).
\end{align}
\end{proposition}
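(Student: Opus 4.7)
The plan is to treat the five functions in turn, verifying for each (a) that $g(u) := 1 - f(1-u)$ has non-negative Taylor coefficients at the origin summing to $1$ (so that $f$ has the required form \eqref{eq: f 1-p}--\eqref{eq: sum c k}) and (b) that the asymptotic conditions \eqref{eq: cond f(p)} and \eqref{eq: cond f'(p)} hold. Everything will hinge on one building block: that $h(u) := 1 - \sqrt{1-u}$ has positive Taylor coefficients in $u$ and satisfies $h(1) = 1$. This is the $a = 1/2$ instance of \eqref{eq: p a}, whose coefficients are already provided by \eqref{eq: coef p a}.

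Using $h$, I would obtain each representation by algebraic manipulation. For \eqref{eq: ex frac sqrt} a direct computation gives $g(u) = w/(1-w) = \sum_{n\geq 1} w^n$ with $w = h(u)/2$. For \eqref{eq: ex exp sqrt} the identity $e^{-\sqrt{1-u}} = e^{-1}\, e^{h(u)}$ yields $g(u) = (e^{h(u)} - 1)/(e-1) = \sum_{n\geq 1} h(u)^n/(n!\,(e-1))$. For \eqref{eq: ex p log p} expanding $(1-u)\log(1-u)$ produces $g(u) = \sum_{k\geq 2} u^k/(k(k-1))$. Non-negativity is manifest in each case (products and positively weighted sums of power series with non-negative coefficients still have non-negative coefficients), and $g(1) = 1$ follows by a one-line evaluation (or telescoping for the last).

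The delicate case is \eqref{eq: ex log sqrt}, since the series of $\log_2(1+x)$ has alternating signs. The idea is to rewrite $g(u) = \log_2\bigl(2/(1+\sqrt{1-u})\bigr) = \log_2(1+r)$ with $r = (1-\sqrt{1-u})/(1+\sqrt{1-u})$, and then invoke $\log(1+r) = -\log(1 - r/(1+r)) = \sum_{k\geq 1}(r/(1+r))^k/k$. A brief simplification shows that $r/(1+r)$ collapses to $h(u)/2$, so $g(u) = (1/\ln 2)\sum_{k \geq 1} (h(u)/2)^k/k$ has non-negative coefficients; at $u=1$ this evaluates to $(1/\ln 2)\cdot \ln 2 = 1$. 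I expect this reformulation to be the main obstacle, since a naive expansion of $\log_2(1+\sqrt p)$ gives no positivity information.

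For the asymptotic part, the discussion preceding the proposition already reduces \eqref{eq: cond f(p)} and \eqref{eq: cond f'(p)} to \eqref{eq: cond f a b}. For \eqref{eq: p a}--\eqref{eq: ex exp sqrt} elementary Taylor expansion near $p = 0$ yields $f(p) \sim b p^a$ for a suitable $a \in (0,1)$ and $b > 0$ (the exponent $a$ itself in \eqref{eq: p a} and $a = 1/2$ in each of the three $\sqrt p$ cases), so \eqref{eq: cond f a b} applies. For \eqref{eq: ex p log p} the conditions must be checked by hand: $f(p)/p = 1-\log p \to \infty$ and $f'(p) = -\log p$, so $f'(p)/(f(p)/p) = -\log p/(1-\log p) \to 1$, giving \eqref{eq: cond f'(p)} directly.
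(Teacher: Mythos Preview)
Your argument is correct. For $f(p)=p^a$ and $f(p)=p(1-\log p)$ it coincides with the paper's, and the asymptotic verifications match. The real difference lies in how you handle \eqref{eq: ex frac sqrt}--\eqref{eq: ex exp sqrt}. The paper derives explicit closed forms for the coefficients $c_k$: for \eqref{eq: ex frac sqrt} by shifting the Taylor coefficients of $\sqrt p$, for \eqref{eq: ex log sqrt} via the $\arcsech$ expansion \cite[equation 4.6.32]{Abramowitz70}, and for \eqref{eq: ex exp sqrt} via the Bessel-polynomial generating function \cite{Grosswald78}. You instead exhibit each $g(u)=1-f(1-u)$ as a composition $F(h(u))$ with $h(u)=1-\sqrt{1-u}$ and an outer series $F$ having non-negative coefficients, then appeal to closure of non-negative-coefficient power series under products and positive combinations. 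This is a genuinely different and more uniform route: it avoids the special-function references entirely and makes non-negativity immediate, at the cost of not producing the closed-form $c_k$ that the paper obtains. Your handling of \eqref{eq: ex log sqrt} via the rewriting $\log(1+r)=-\log\bigl(1-r/(1+r)\bigr)$ with $r/(1+r)=h(u)/2$ is particularly neat compared with the paper's $\arcsech$ detour. Either approach suffices for the proposition as stated, since only non-negativity and $\sum c_k=1$ are needed; the latter you can get in one line from $f(0)=0$, exactly as the paper does.
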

Note that functions \eqref{eq: p a}--\eqref{eq: ex exp sqrt}
% *!*
satisfy the more specialized condition \eqref{eq: cond f a b}, whereas \eqref{eq: ex p log p} does not.

The following theorem and its corollary establish that, for the class of functions defined above, Algorithm \ref{algo: Bernoulli factory rand}  is asymptotically optimum among all Bernoulli factories that are fast in the sense of Nacu and Peres.

\begin{theorem}
\label{teo: E[N] asympt opt}
Let $S \subseteq (0,1)$ be an open set that has $0$ as a limit point. Consider a differentiable function $f: S \to (0,1)$ for which \eqref{eq: cond f'(p)} holds. Any (possibly randomized) Bernoulli factory for $f$ that is fast in the sense of \eqref{eq: exp tail} satisfies
\begin{equation}
\label{eq: teo: E[N] asympt opt}
\E[N] = \Omega(f(p)/p) \quad \text{ for } p \rightarrow 0.
\end{equation}
\end{theorem}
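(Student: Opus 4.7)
The plan is to apply Theorem \ref{teo: E[N] bound} directly and then feed in the hypothesis \eqref{eq: cond f'(p)}; the theorem to prove is essentially a corollary of the Cram\'er-Rao-type bound already established, so no new machinery should be required.

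First I would invoke Theorem \ref{teo: E[N] bound}, which, for every $p \in S$, gives
\begin{equation*}
\E[N] \geq (f'(p))^2 \, \frac{p(1-p)}{f(p)(1-f(p))}.
\end{equation*}
By \eqref{eq: cond f'(p)}, there exist constants $C, \delta > 0$ such that $f'(p) \geq C f(p)/p$ for every $p \in S$ with $p < \delta$. Squaring this inequality and substituting into the previous display yields
\begin{equation*}
\E[N] \geq C^2 \, \frac{f(p)^2}{p^2} \cdot \frac{p(1-p)}{f(p)(1-f(p))} = C^2 \, \frac{f(p)}{p} \cdot \frac{1-p}{1-f(p)}.
\end{equation*}

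To close the argument, I would observe that $f(p) \in (0,1)$ forces $1 - f(p) < 1$, and that $1 - p > 1/2$ for $p < 1/2$; hence the factor $(1-p)/(1-f(p))$ is bounded below by $1/2$. It follows that $\E[N] \geq (C^2/2) \, f(p)/p$ whenever $p \in S$ and $p < \min(\delta, 1/2)$, which is exactly the meaning of $\E[N] = \Omega(f(p)/p)$ for $p \to 0$ as defined in the Introduction.

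The main difficulty of the whole chain has already been discharged in Theorem \ref{teo: E[N] bound} (via the sequential Cram\'er-Rao inequality together with the continuity of $\E[N]$ from Proposition \ref{prop: E[N] cont p}); what remains here is purely algebraic, and the only thing I need to be slightly careful about is that the factor $(1-p)/(1-f(p))$ stays bounded away from $0$ for small $p$, which is automatic because $f$ maps into $(0,1)$. Note in particular that no extra growth or regularity on $f$ beyond differentiability and \eqref{eq: cond f'(p)} is used, and condition \eqref{eq: cond f(p)} is not needed in the proof.
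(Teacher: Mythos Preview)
Your proof is correct and follows essentially the same approach as the paper: apply Theorem~\ref{teo: E[N] bound}, substitute the lower bound on $f'(p)$ from \eqref{eq: cond f'(p)}, and then bound the factor $(1-p)/(1-f(p))$ below by $1/2$ on $S\cap(0,\min\{\delta,1/2\})$ to obtain $\E[N]\geq (C^2/2)\,f(p)/p$. The paper's argument is identical in structure and even uses the same constants $C'=C^2/2$ and $\delta'=\min\{\delta,1/2\}$.
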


Note that this theorem does not require \eqref{eq: cond f(p)}. However, for functions that do not satisfy this condition the result \eqref{eq: teo: E[N] asympt opt} is less interesting, and indeed a stronger bound on $\E[N]$ can be found. Namely, for a non-constant function any algorithm needs to take at least one input from $\secx$, and thus $\E[N] \geq 1$, which is $\Omega(f(p)/p)$ if \eqref{eq: cond f(p)} does not hold. On the other hand, a constant function only satisfies the hypotheses of the theorem if it is the null function; and in any case, constant functions can be simulated by  a randomized algorithm without observing $\secx$ (only $\secu$ is needed).

\begin{corollary}
\label{corol: E[N] asympt opt}
For any function $f$ that can be expressed as \eqref{eq: f 1-p}--\eqref{eq: sum c k}
% *!* --
and satisfies conditions \eqref{eq: cond f(p)} and \eqref{eq: cond f'(p)}, Algorithm \ref{algo: Bernoulli factory rand} is asymptotically optimal as $p \rightarrow 0$ among all fast algorithms; that is, for any algorithm that satisfies condition \eqref{eq: exp tail}, there exist $C, \delta >0$ such that $\E[N] \geq C f(p)/p$ for all $p < \delta$.
\end{corollary}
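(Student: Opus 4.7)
The plan is to combine the exact expression for $\E[N]$ of Algorithm \ref{algo: Bernoulli factory rand} given in Theorem \ref{teo: E[N], exp tail} with the asymptotic lower bound for any fast simulation established in Theorem \ref{teo: E[N] asympt opt}. First I would check that Algorithm \ref{algo: Bernoulli factory rand} is applicable: the corollary assumes that $f$ admits the representation \eqref{eq: f 1-p}--\eqref{eq: sum c k}, so Theorem \ref{teo: E[N], exp tail} immediately supplies both that the algorithm is fast in the sense of \eqref{eq: exp tail} and that its expected number of inputs equals $f(p)/p$ exactly. This quantity is therefore the benchmark that any competing fast algorithm must match (up to a multiplicative constant) as $p \to 0$.

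Next I would verify that the hypotheses of Theorem \ref{teo: E[N] asympt opt} hold for $f$ and for any competing fast Bernoulli factory. The domain $S = (0,1)$ is open and admits $0$ as a limit point; the function $f$, being defined by a convergent power series in $1-p$ on $(0,1)$, is differentiable there and by assumption maps into $(0,1)$; and condition \eqref{eq: cond f'(p)} is one of the standing hypotheses of the corollary. Theorem \ref{teo: E[N] asympt opt} then yields $\E[N] = \Omega(f(p)/p)$ as $p \to 0$ for every such competing factory. Unwinding the $\Omega$ notation as recalled in the introduction, this says exactly that there exist $C,\delta>0$ with $\E[N] \geq C\, f(p)/p$ for all $p < \delta$, which is the statement of the corollary.

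Since the two cited theorems do all the hard work, no essential obstacle remains; the only subtlety worth flagging is the role of condition \eqref{eq: cond f(p)}, which appears in the corollary but not in Theorem \ref{teo: E[N] asympt opt}. Its function here is to make the conclusion non-trivial: it guarantees $f(p)/p \to \infty$, so that the lower bound $\E[N] \geq C\, f(p)/p$ is genuinely stronger than the trivial bound $\E[N] \geq 1$ valid for any non-constant factory, and hence that matching $f(p)/p$ (as Algorithm \ref{algo: Bernoulli factory rand} does) is a meaningful notion of optimality rather than an automatic consequence of observing at least one input.
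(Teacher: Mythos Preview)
Your proposal is correct and matches the paper's approach: the corollary is an immediate consequence of Theorem \ref{teo: E[N], exp tail} (which gives $\E[N]=f(p)/p$ for Algorithm \ref{algo: Bernoulli factory rand}) and Theorem \ref{teo: E[N] asympt opt} (which gives the $\Omega(f(p)/p)$ lower bound for any fast factory), and indeed the paper does not even include a separate proof for it. Your checks on the hypotheses of Theorem \ref{teo: E[N] asympt opt} and your remark on the role of condition \eqref{eq: cond f(p)} are accurate and in line with the discussion following that theorem.
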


The results presented in this section are somewhat related to other results that have appeared in previous works. Elias \cite{Elias72} considers a non-randomized Bernoulli factory for the function $f(p) = 1/2$, obtains an entropy-based bound on the average number of outputs per input, and gives an algorithm that approaches that bound. Peres \cite{Peres92} shows that iterating von Neumann's procedure achieves the same efficiency. Stout and Warren \cite{Stout84} carry out a similar analysis for the average number of inputs per output required for simulating  $f(p) = 1/2$, and also propose several algorithms. Huber \cite{Huber16, Huber17}, as previously mentioned, considers linear functions $f(p) = cp$, $c>1$, $p \in (0,(1-\epsilon)/c)$, and gives upper and lower bounds on the average number of inputs per output.

For constant functions Theorem \ref{teo: E[N] bound} reduces to the trivial $\E[N] \geq 0$. On the other hand, for $f(p) = cp$ it yields $c(1-p)/(1-cp)$ as a lower bound for $\E[N]$. This is the bound that was conjectured in \cite[section 4]{Huber17} based on an informal argument, which has thus been formalized (and generalized) by Theorem \ref{teo: E[N] bound}. As for Theorem \ref{teo: E[N] asympt opt}, it cannot be directly compared with the bounds in the above referenced works because, as previously mentioned, the theorem does not apply to the constant function $f(p)=1/2$, and for linear functions it reduces to the trivial $\liminf_{p \rightarrow 0} \E[N] > 0$.

\section{Non-randomized algorithm}
\label{part: algo non-rand}

A non-randomized version of Algorithm \ref{algo: Bernoulli factory rand} will be given next. Instead of using an auxiliary variable $U_i$ to produce a Bernoulli variable $V_i$ with parameter $d_i$ in step \ref{step: algo rand, U n V n}, the required $V_i$ is obtained from additional input samples, using a well known procedure based on the binary expansion of $d_i$ \cite[proof of proposition 13]{Nacu05}. Note also that a variation of \cite[algorithm 3]{Latuszynski11} could be used for the same purpose, with progressively finer truncations of the binary representation of $d_i$ providing the lower and upper bounds required therein. Consider the binary expansion of $d_i \in [0,1]$ assuming zero as integer part and an infinite amount of digits in the fractional part. Thus the decimal values $0$, $0.75$ and $1$ are respectively expressed in binary as $0.000\cdots$, $0.11000\cdots$ (or equivalently $0.10111\cdots$) and $0.11111\cdots$.

\begin{algorithm}
\label{algo: Bernoulli factory non-rand}
The algorithm uses the same steps \ref{step: algo rand, start}--\ref{step: algo rand, end} from Algorithm \ref{algo: Bernoulli factory rand} except that step \ref{step: algo rand, U n V n} is replaced by the following:
% *!* "3" here refers to step \ref{step: algo rand, U n V n} in \ref{algo: Bernoulli factory rand}
{
\renewcommand{\theenumi}{3.\arabic{enumi}}
\begin{enumerate}
\item
\label{step: algo non-rand, start}
Set $j=1$.
\item
\label{step: algo non-rand, loop}
Keep taking pairs of values from the sequence $\secx$ until the two values of a pair are different. Let $T$ be the first value of that pair.
\item
\label{step: algo non-rand, branch}
If $T=0$ increase $j$ and go back to step \ref{step: algo non-rand, loop}. Else set $V_i$ equal to the $j$-th digit in the fractional part of the binary expansion of $d_i$.
\end{enumerate}
}
\end{algorithm}

The total number of inputs taken from $\secx$ is obviously greater than with Algorithm \ref{algo: Bernoulli factory rand}. However, the final value of $i$ in Algorithm \ref{algo: Bernoulli factory rand} has an exponential tail, which can be used for establishing that Algorithm \ref{algo: Bernoulli factory non-rand} is fast as defined by Nacu and Peres.

\begin{theorem}
\label{teo: algo non-rand E[N], fast}
Algorithm \ref{algo: Bernoulli factory non-rand} requires an average total number of inputs
\begin{equation}
\label{eq: E N, non-rand}
\E[N] = \frac{f(p)}{p} \left(1 + \frac{2}{p(1-p)}\right).
\end{equation}
In addition, the algorithm satisfies \eqref{eq: exp tail}.
\end{theorem}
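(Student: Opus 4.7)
The plan is to reduce Algorithm \ref{algo: Bernoulli factory non-rand} to Algorithm \ref{algo: Bernoulli factory rand} in distribution and then apply Wald's identity. Let $M$ denote the final value of $i$ when the algorithm terminates, $K_i$ the number of inputs from $\secx$ consumed during step 3 of outer iteration $i$, and $Y_i = 1 + K_i$ the total inputs consumed in that iteration (counting the single $X_i$ taken in step 2). Then $N = \sum_{i=1}^M Y_i$. A short check shows that step 3 produces $V_i$ with parameter $d_i$: writing the binary expansion $d_i = \sum_{j=1}^\infty b_j 2^{-j}$, the index $J$ of the first fair coin $T$ equal to $1$ satisfies $\Pr[J=j] = 2^{-j}$, hence $\Pr[V_i=1] = \sum_{j\geq 1} 2^{-j} b_j = d_i$. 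Consequently the outer loop of Algorithm \ref{algo: Bernoulli factory non-rand} is distributionally identical to that of Algorithm \ref{algo: Bernoulli factory rand}, so by Theorem \ref{teo: E[N], exp tail}, $\E[M] = f(p)/p$ and $M$ has an exponential tail.

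Next I would evaluate $\E[Y_i]$. In step 3.2, von Neumann extraction consumes pairs of inputs until the two differ, which is geometric with success probability $2p(1-p)$ per pair, giving $1/(p(1-p))$ expected inputs per extracted fair coin. The number of fair coins drawn (stopping at the first $T=1$) is itself geometric with parameter $1/2$, mean $2$. Hence $\E[K_i] = 2/(p(1-p))$ and $\E[Y_i] = 1 + 2/(p(1-p))$. The $Y_i$ are i.i.d.\ because successive iterations consume disjoint blocks of the input stream, and $M$ is a stopping time with respect to $\mathcal{F}_i = \sigma(Y_1,\ldots,Y_i)$ since the decision at iteration $i$ depends only on $X_i$ and $V_i$, both $\mathcal{F}_i$-measurable. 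Wald's identity then delivers $\E[N] = \E[M]\,\E[Y_1]$, which is exactly \eqref{eq: E N, non-rand}.

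For the exponential tail, I would first argue that $Y_1$ has a finite moment generating function in a neighbourhood of $0$: writing $K_1 = 2\sum_{l=1}^L G_l$ with $L$ geometric$(1/2)$ and $G_l$ i.i.d.\ geometric$(2p(1-p))$, the factorization $\E[e^{\theta K_1}] = \E[\phi_G(2\theta)^L]$ is finite whenever $\phi_G(2\theta) < 2$, which holds for sufficiently small $\theta > 0$. For any $\epsilon > 0$, a union bound gives
\begin{equation*}
\Pr[N>n] \;\leq\; \Pr\!\left[M > \lfloor \epsilon n\rfloor\right] \;+\; \Pr\!\left[\sum_{i=1}^{\lfloor \epsilon n\rfloor} Y_i > n\right].
\end{equation*}
Choosing $\epsilon < 1/\E[Y_1]$, the first term decays exponentially in $n$ by the exponential tail of $M$, and the second by the Cram\'er--Chernoff bound applied to the i.i.d.\ sum (whose per-term mean lies strictly below $1/\epsilon$). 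Combining the two yields \eqref{eq: exp tail} for $N$.

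The step that will require the most care is the justification of Wald's identity: within a single iteration, $K_i$ and $V_i$ are statistically coupled (both are functions of the same extraction bits), which might superficially suggest an obstruction. However, Wald only demands that the blocks $Y_i$ be i.i.d.\ and that $M$ be a stopping time for the filtration they generate, and both requirements are immediate from the construction, so the intra-iteration dependence is harmless.
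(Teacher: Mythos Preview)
Your decomposition and computation of $\E[N]$ match the paper's proof essentially step for step: both identify the outer loop with Algorithm~\ref{algo: Bernoulli factory rand}, compute the expected cost of one iteration as $1+2/(p(1-p))$ via the nested geometric structure (von Neumann pairs inside a geometric number of fair-coin draws), and multiply. The paper does the multiplication informally; you invoke Wald explicitly.

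However, your justification of Wald contains a genuine error. You assert that $M$ is a stopping time for $\mathcal F_i=\sigma(Y_1,\dots,Y_i)$ because ``$X_i$ and $V_i$ are both $\mathcal F_i$-measurable.'' They are not: $Y_i=1+K_i$ records only how many inputs were consumed, and neither $X_i$ (the step-2 input) nor $V_i$ (the extracted bit) can be recovered from that count. So $\{M\le i\}$ is \emph{not} $\mathcal F_i$-measurable, and the version of Wald you appeal to does not apply as stated. The fix is easy: either enlarge the filtration to $\mathcal G_i$ generated by \emph{all} inputs consumed through iteration $i$ (then $M$ is a $(\mathcal G_i)$-stopping time, $Y_i$ is $\mathcal G_i$-measurable, and $Y_{i+1}$ is independent of $\mathcal G_i$), or simply note that $\{M\ge i\}$ is determined by iterations $1,\dots,i-1$ and hence independent of $Y_i$, which gives $\E[N]=\sum_i\E[Y_i]\Pr[M\ge i]=\E[Y_1]\,\E[M]$ directly. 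Ironically, the intra-iteration coupling you flag as the delicate point is harmless; the actual gap is the filtration you chose.

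For the exponential tail your union-bound plus Cram\'er--Chernoff argument is correct and self-contained. The paper instead invokes \cite[proposition~12]{Nacu05} (closure of exponential tails under random sums) twice, first to get the tail of $K_i$ from the nested geometrics and then to combine with the tail of $M$. Your route is slightly more elementary; the paper's is shorter once the cited proposition is taken as a black box.
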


The value of $\E[N]$ attained by Algorithm \ref{algo: Bernoulli factory non-rand} could be improved in several ways. Firstly, if $d_i \in [0,1]$ happens to be a dyadic number, i.e.\ its binary expansion has an infinite trail of zeros or ones starting at the $m$-th digit, step \ref{step: algo non-rand, loop} is unnecessary for $j \geq m$ (once $j$ reaches $m$, the output $V_i$ is known to be the repeating digit). This can lead to a lower $\E[N]$ for certain functions. Secondly, step \ref{step: algo non-rand, loop}, which is von Neumann's procedure for obtaining a Bernoulli variable with parameter $1/2$, can be replaced by more efficient approaches; see for example \cite{Elias72} and \cite{Peres92}. Lastly, instead of Algorithm \ref{algo: Bernoulli factory non-rand} a modification of \cite[algorithm 3]{Latuszynski11} could be used, replacing the auxiliary uniform random variable required therein by a procedure similar to step \ref{step: algo rand, U n V n} of Algorithm \ref{algo: Bernoulli factory non-rand}.

\section{Extensions of the algorithms}
\label{part: extensions}

The presented algorithms can be modified in several ways to extend the range of functions that can be simulated. Algorithm \ref{algo: Bernoulli factory rand} will be considered in the following, but the discussion also applies to its non-randomized version given by Algorithm \ref{algo: Bernoulli factory non-rand}.

An obvious modification is to change step \ref{step: algo rand, output} of the algorithm so that instead of $Y = X_i$ it outputs the complementary variable $Y=1-X_i$. This simulates the function $g(p) = 1-f(p)$. The average number of inputs and asymptotic optimality of the modified algorithm are unaffected. Equation \eqref{eq: f 1-p} and conditions \eqref{eq: cond f(p)} and \eqref{eq: cond f'(p)} are modified replacing $f(p)$ by $1-g(p)$, whereas \eqref{eq: c k non-neg} and \eqref{eq: sum c k} are maintained.

The same operation can be applied to the input variables in step \ref{step: algo rand, input}. This allows simulation of functions $g(p) = f(1-p)$, where $f$ satisfies the conditions of the original algorithm; and the simulation is asymptotically optimal for $p \rightarrow 1$.
% The average number of inputs is then $\E[N]= g(p)/(1-p)$, and the modified algorithm is asymptotically optimal for $p \rightarrow 1$ if $\lim_{p \rightarrow 1} g(p)/(1-p)^a = b$ for some $a \in (0,1)$, $b \in (0,\infty)$.
% Es hacer p'=1-p y trabajar con p' como hasta ahora

It is also possible to simulate a function obtained from applying certain operations to two constituent functions $f_1$ and $f_2$. Define the functions $f$ (composition of $f_1$, $f_2$), $g$ (product with complement) and $h$ (convex combination) as follows:
\begin{align}
\label{eq: comb of functions, start}
f(p) & = f_2(f_1(p)) \\
g(p) & = 1-(1-f_1(p)) (1-f_2(p)) \\
\label{eq: comb of functions, end}
h(p) & = \alpha f_1(p) + (1-\alpha) f_2(p) \quad  \text{for } \alpha \in (0,1).
\end{align}

\begin{proposition}
\label{prop: extensions}
Consider $f_1$, $f_2$ that can be expressed as in \eqref{eq: f 1-p}--\eqref{eq: sum c k}.
\begin{itemize}
\item
$f$, $g$ and $h$ can also be expressed as in \eqref{eq: f 1-p}--\eqref{eq: sum c k}.
\item
$f$, $g$ and $h$ satisfy \eqref{eq: cond f(p)} if $f_1$ and $f_2$ do.
\item
$f$, $g$ and $h$ satisfy \eqref{eq: cond f'(p)} if $f_1$ and $f_2$ do.
\end{itemize}
\end{proposition}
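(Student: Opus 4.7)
The plan is to verify each of the three bullets for $f$, $g$, $h$ in turn, relying on the explicit power series $1-f_i(p) = \sum_{k\geq 1} c_k^{(i)}(1-p)^k$ with non-negative $c_k^{(i)}$ summing to $1$. For the first bullet (expressibility), $h$ is immediate: $1-h(p) = \alpha(1-f_1(p)) + (1-\alpha)(1-f_2(p))$ yields coefficients $\alpha c_k^{(1)} + (1-\alpha)c_k^{(2)}$, non-negative and summing to $1$. For $g$, use the Cauchy product $(1-f_1(p))(1-f_2(p)) = \sum_{n \geq 2} \left(\sum_{k=1}^{n-1} c_k^{(1)} c_{n-k}^{(2)}\right)(1-p)^n$, whose coefficients are non-negative and, by evaluating at $p=0$, sum to $1$. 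For $f$, substitute: $1-f(p) = \sum_{k \geq 1} c_k^{(2)}(1-f_1(p))^k$, where each $(1-f_1(p))^k$ is itself a power series in $(1-p)$ with non-negative coefficients and no constant term; interchanging the double sum, justified by non-negativity, exhibits the required series, with sum $\sum_k c_k^{(2)}(1-f_1(0))^k = 1$.

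For the second bullet, linearity directly gives $h(p)/p \to \infty$. For $g$, the inequality $1-g = (1-f_1)(1-f_2) \leq 1-f_1$ yields $g \geq f_1$, so $g(p)/p \geq f_1(p)/p \to \infty$. For $f$, factor $f(p)/p = [f_2(f_1(p))/f_1(p)] \cdot [f_1(p)/p]$; since $f_1(p) \to 0$ and $f_2(q)/q \to \infty$ as $q \to 0$, both factors diverge.

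For the third bullet, take $C_i, \delta_i > 0$ with $f_i'(p) \geq C_i f_i(p)/p$ on $(0, \delta_i)$. For $h$, $h'(p) = \alpha f_1'(p) + (1-\alpha)f_2'(p) \geq \min(C_1, C_2)\, h(p)/p$. For $g$, $g'(p) = f_1'(p)(1-f_2(p)) + (1-f_1(p))f_2'(p)$; restricting to $p$ small enough that $f_1(p), f_2(p) \leq 1/2$ and using $g \leq f_1+f_2$ gives the required bound. For $f$, the chain rule yields $f'(p) = f_2'(f_1(p))\, f_1'(p) \geq C_2\,[f_2(f_1(p))/f_1(p)] \cdot C_1\,[f_1(p)/p] = C_1 C_2\, f(p)/p$, once $p$ is small enough that both $p$ and $f_1(p)$ lie in the respective ranges of applicability.

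The principal obstacle is the composition case: showing that $f_2 \circ f_1$ still has a representation in the required form requires carefully justifying the interchange of two infinite sums (non-negativity handles this via Tonelli), and the derivative bound for $f$ needs the two asymptotic hypotheses to be chainable, which works precisely because $f_1(p) \to 0$ as $p \to 0$. The other cases reduce to routine manipulations.
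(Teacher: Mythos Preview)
Your proof is correct and follows essentially the same approach as the paper: expand each of $1-f$, $1-g$, $1-h$ explicitly to identify non-negative coefficients summing to $1$, factor $f(p)/p$ through the intermediate variable $f_1(p)$ for the composition, and chain the derivative hypotheses via the chain rule and the product formula for $g'$. Your bounds for $g$ in the second and third bullets (using $g\geq f_1$ and restricting to $f_i(p)\leq 1/2$) are slightly more direct than the paper's, which instead writes $g=f_1+f_2(1-f_1)$ and uses monotonicity to bound $1-f_i(p)>1-f_i(1/2)$, but the underlying ideas are the same.
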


By Proposition \ref{prop: extensions}, if $f_1$ and $f_2$ satisfy \eqref{eq: f 1-p}--\eqref{eq: sum c k} Algorithm \ref{algo: Bernoulli factory rand} can be used to directly simulate $f$, $g$ or $h$. Alternatively, it is possible to simulate $f_1$ and $f_2$ separately and then combine the results to obtain the desired function. In the three cases this alternative approach is easily seen to require the same average number of inputs as applying Algorithm \ref{algo: Bernoulli factory rand} to $f$, $g$ or $h$. Consider for example the case of function $f$. In the alternative approach, Algorithm \ref{algo: Bernoulli factory rand} is first applied to simulate $f_1$ with input sequence $\secx$. This produces a sequence of independent Bernoulli variables with parameter $f_1(p)$. Then the algorithm is applied again to simulate $f_2$ on this sequence. The first stage requires $f_1(p)/p$ inputs on average to produce each output. The second uses on average $f_2(f_1(p))/f_1(p)$ outputs of the first stage as inputs. The average number of original inputs is the product of those two numbers, which equals $f(p)/p$.

Of course, other combinations of functions may be realizable, even if the resulting function cannot be simulated directly by a single application of the algorithm. For example, if $f_1$, $f_2$ satisfy the conditions for Algorithm \ref{algo: Bernoulli factory rand}, it is immediate to simulate $f(p)=f_1(p)f_2(p)$ by multiplying the outputs for $f_1$ and $f_2$ (the average number of required inputs can be reduced by not computing the second output if the first is $0$). However, it may not be possible to simulate $f$ directly because its coefficients $c_k$ are not necessarily non-negative. Similarly, given $\alpha \in (0,1)$, multiplying the output for $f_1$ by a Bernoulli variable with parameter $\alpha$ simulates $\alpha f_1$. This allows relaxing the restriction \eqref{eq: sum c k} to $\sum_{k=1}^\infty c_k \le 1$.

\section{Conclusions and future work}
\label{part: concl, future}

An algorithm has been presented that can simulate certain functions $f$ using an average number of inputs that, within the class of fast algorithms (in the sense of \cite{Nacu05}), is asymptotically optimal for $p$ vanishingly small. This algorithm generalizes that given in \cite{Wastlund99} for $f(p) = \sqrt{p}$, uses fewer inputs, and admits a non-randomized version.

In future research, it would be interesting to relax the sufficient condition \eqref{eq: cond f'(p)} for asymptotic optimality (Theorem \ref{teo: E[N] asympt opt}), perhaps using a different proof technique. The class of Bernoulli factories to which Theorems \ref{teo: E[N] bound} and \ref{teo: E[N] asympt opt} apply (namely, fast factories) could be extended if the continuity of $\E[N]$ as a function of $p$ (Proposition \ref{prop: E[N] cont p})
% (which is used in the proof of Theorem \ref{teo: E[N] bound}) 
could be proved under more general conditions. It would also be useful to extend the algorithm to a more general class of functions, especially in relation to condition \eqref{eq: c k non-neg}. In this regard, \cite[section 3.1]{Latuszynski11} gives a method to simulate functions with alternating series expansions.

\section{Proofs}
\label{part: proofs}

\subsection{Proof of Proposition \ref{prop: c k d k}}

From \eqref{eq: sum c k} and \eqref{eq: d k c k} it stems that
\begin{align}
\label{eq: prop: c k d k 1}
d_k & = \frac{c_k}{\sum_{j=k}^\infty c_j}, \\
\label{eq: prop: c k d k 2}
1-d_k & = \frac{\sum_{j=k+1}^\infty c_j}{\sum_{j=k}^\infty c_j}.
\end{align}
Combining \eqref{eq: prop: c k d k 1} and \eqref{eq: prop: c k d k 2},
\begin{equation}
d_k \prod_{j=1}^{k-1} (1-d_j) = \frac{c_k}{\sum_{j=k}^\infty c_j} \frac{\sum_{j=2}^\infty c_j}{\sum_{j=1}^\infty c_j} \frac{\sum_{j=3}^\infty c_j}{\sum_{j=2}^\infty c_j} \cdots \frac{\sum_{j=k}^\infty c_j}{\sum_{j=k-1}^\infty c_j} = \frac{c_k}{\sum_{j=1}^\infty c_j} = c_k.
\end{equation}
\qed

\subsection{Proof of Theorem \ref{teo: Pr Y}}

The algorithm ends after taking $n$ inputs producing output $Y=0$ if and only if $X_i=0$, $V_i=0$ for $i \leq n -1$; $V_n=1$; and $X_n = 0$. Since all these variables are independent,
\begin{equation}
\Pr[N=n,\, Y=0] = (1-d_1)(1-d_2)\cdots(1-d_{n-1}) d_n (1-p)^{n},
\end{equation}
which by Proposition \ref{prop: c k d k} equals $c_n (1-p)^n$. Therefore
\begin{equation}
\Pr[Y=1] = 1 - \sum_{n=1}^\infty \Pr[N=n,\, Y=0] = 1 - \sum_{n=1}^\infty c_n (1-p)^n = f(p).
\end{equation}
\qed

\subsection{Proof of Theorem \ref{teo: E[N], exp tail}}

The algorithm requires at least $n$ inputs if and only if $X_i=0$, $V_i=0$ for $i \leq n-1$; that is,
\begin{equation}
\label{eq: Pr N geq n}
\Pr[N \geq n] = (1-d_1)(1-d_2)\cdots(1-d_{n-1}) (1-p)^{n-1}.
\end{equation}
Thus
\begin{equation}
\E[N] = \sum_{n=1}^\infty \Pr[N \geq n] = \sum_{n=1}^\infty (1-d_1)(1-d_2)\cdots(1-d_{n-1}) (1-p)^{n-1}.
\end{equation}
Making use of Proposition \ref{prop: c k d k} and equation \eqref{eq: d k c k},
\begin{equation}
\E[N] = \sum_{n=1}^\infty \frac{c_n}{d_n} (1-p)^{n-1} = \sum_{n=1}^\infty (1-p)^{n-1} \left(1-\sum_{j=1}^{n-1} c_j \right) = \frac{1}{p} - \sum_{n=1}^\infty \sum_{j=1}^{n-1} c_j (1-p)^{n-1}.
\end{equation}
Since all the terms in the double series are non-negative, the order of summation can be changed. This gives, taking into account \eqref{eq: f 1-p},
\begin{equation}
\label{eq: proof theo E[N] 1}
\E[N] = \frac{1}{p} - \sum_{j=1}^\infty \, \sum_{n=j+1}^\infty c_j (1-p)^{n-1} = \frac{1}{p} - \sum_{j=1}^\infty \frac{c_j (1-p)^j} p = \frac{f(p)}{p}.
\end{equation}

Using the fact that all coefficients $d_i$ are upper-bounded by $1$, \eqref{eq: Pr N geq n} yields
\begin{equation}
\label{eq: Pr N > n}
\Pr[N > n] = (1-d_1)(1-d_2)\cdots(1-d_n) (1-p)^n \leq (1-p)^n,
\end{equation}
and thus \eqref{eq: exp tail} holds with $A=1$, $\beta = 1-p<1$.
\qed

\subsection{Proof of Proposition \ref{prop: E[N] cont p}}

Let $S$ be an open subset of $(0,1)$, and let $f: S \to (0,1)$ be a function. Consider an arbitrary, randomized Bernoulli factory $\mathcal B$ for $f$ based on the sequence $\secx$ with parameter $p$, randomizing sequence $\secu$, stopping functions $\tau_i(x_1,u_1;\ldots,x_i,u_i)$, and output functions $\gamma_i(x_1,u_1;\ldots,x_i,u_i)$, $i \in \mathbb N$, all assumed to be measurable.

For clarity, the following definitions will be used, which explicitly show the dependence of certain probabilities on $p$: $\Phi_n(p) = \Pr[N \geq n]$; $\varphi_n(p) = \Pr[N=n]$; and $\varphi_{n,y}(p) = \Pr[N=n,\, Y=y]$ for $y \in \{0,1\}$.

The randomized factory $\mathcal B$ can be replaced by an equivalent, non-randomized sequential procedure $\mathcal B'$ that produces the same output using an input sequence $\secz$ defined by $Z_i = X_i + U_i$. The equivalence is clear from the fact that $X_i$ and $U_i$ can be retrieved from $Z_i$ as $X_i = \lfloor Z_i \rfloor$, $U_i = Z_i - \lfloor Z_i \rfloor$. The stopping functions of $\mathcal B'$, denoted by $\tau'_i$, are related to those of $\mathcal B$ as $\tau'_i(z_1,\ldots,z_i) = \tau_i(\lfloor z_1 \rfloor, z_1-\lfloor z_1 \rfloor; \ldots; \lfloor z_i \rfloor, z_i-\lfloor z_i \rfloor)$. Each $\tau'_i$ is measurable because $\tau_i$ is. Similarly, $\gamma'_i(z_1,\ldots,z_i)$ is defined as $\gamma_i(z_1-\lfloor z_1 \rfloor,\ldots,z_i-\lfloor z_i \rfloor)$. Let the random variable $N$ represent the number of $Z_i$ inputs used by $\mathcal B'$ (or of $X_i$ inputs used by $\mathcal B$). The sequence $\secz$ will be said to have parameter $p$ if the underlying $\secx$ sequence has parameter $p$. Note that the stopping time $N$ is randomized from the point of view of $\secx$, but is non-randomized with respect to $\secz$.

For a given $n$, the random variables $Z_1,\ldots,Z_n$ are independent; and their joint probability density, with respect to Lebesgue measure, is $\lambda_n(z_1,\ldots,z_n;p) = \prod_{i=1}^n \lambda(z_i;p)$ with
\begin{equation}
\label{eq: lambda}
\lambda(z;p) = \begin{cases}
1-p & \text{if } z\in(0,1) \\
p & \text{if } z\in(1,2).
\end{cases}
\end{equation}
Defining $r = \lfloor z_1\rfloor + \cdots + \lfloor z_n\rfloor$, $\lambda_n(z_1,\ldots,z_n;p)$ can be expressed as $p^r(1-p)^{n-r}$, and
\begin{equation}
\label{eq: der lambda n}
\frac{\partial \lambda_n(z_1,\ldots,z_n;p)}{\partial p} = (r-np)p^{r-1}(1-p)^{n-r-1} = \frac{r-np}{p(1-p)} \lambda_n(z_1,\ldots,z_n;p).
\end{equation}

Let $R_n \subseteq (0,2)^n$ be the set of all $n$-tuples $(z_1,\ldots,z_n)$ such that $\mathcal B'$ with inputs $Z_1 = z_1$,\ldots, $Z_n=z_n$ stops at $N=n$. This set is defined by the conditions $\tau'_1(z_1) = 0$, $\tau'_2(z_1,z_2) = 0$, \ldots, $\tau'_n(z_1,\ldots,z_n) = 1$. Since the stopping functions $\tau'_i$ are measurable, the region $R_n$ is measurable too, and
\begin{equation}
\label{eq: Pr n lambda}
\varphi_n(p) = \int_{R_n} \lambda_n(z_1,\ldots,z_n;p) \, \mathrm d z_1 \cdots \mathrm d z_n.
\end{equation}
Similarly, given $y \in \{0,1\}$, let $R_{n,y}$ denote the region of all $n$-tuples $(z_1,\ldots,z_n)$ such that $\mathcal B'$ with inputs $Z_1 = z_1,\ldots  Z_n=z_n$ stops at $N=n$ producing the output $Y = y$. Clearly $R_n = R_{n,0} \cup R_{n,1}$, where $R_{n,0}$ and $R_{n,1}$ are disjoint. The function $\gamma'_n$ is measurable because the functions $\gamma_i$ are; and thus ${\gamma'_n}^{-1}(\{y\})$ is a measurable set for $y \in \{0,1\}$. The intersection of this set with $R_n$ is precisely the region $R_{n,y}$, which is thus measurable, and
\begin{equation}
\begin{split}
\label{eq: varphi n 2}
\varphi_{n,1}(p)
&= \int_{R_n} \gamma_n'(z_1,\ldots,z_n) \lambda_n(z_1,\ldots,z_n;p) \, \mathrm d z_1 \cdots \mathrm d z_n \\
&= \int_{R_{n,1}} \lambda_n(z_1,\ldots,z_n;p) \, \mathrm d z_1 \cdots \mathrm d z_n.
\end{split}
\end{equation}

The probability $\varphi_n(p) = \Pr[N = n]$ is computed as the sum of $2^n$ terms $\pi(x_1,\ldots,x_n)$, each associated to an $n$-tuple $(x_1,\ldots,x_n) \in \{0,1\}^n$, where $\pi(x_1,\ldots,x_n)$ is the probability that the first $n$ inputs of $\secx$ are $x_1,\ldots,x_n$ and $\mathcal B$ stops at $N=n$. With $r = x_1 + \cdots + x_n$, this can be expressed in terms of the stopping functions $\tau_i$ as follows:
\begin{equation}
\label{eq: pi}
\begin{split}
& \pi(x_1,\ldots,x_n) = p^r (1-p)^{n-r} \cdot \Pr[\tau_1(x_1,U_1)=0] \\
&\quad \cdot \Pr[\tau_2(x_1,U_1;x_2,U_2)=0 \,|\, \tau_1(x_1,U_1)=0] \cdots \\
&\quad \cdot \Pr[\tau_{n-1}(x_1,U_1;x_2,U_2;\ldots;x_{n-1},U_{n-1})=0 \,|\, \tau_{n-2}(x_1,U_1;x_2,U_2;\ldots;x_{n-2},U_{n-2})=0] \\
&\quad \cdot \Pr[\tau_n(x_1,U_1;x_2,U_2;\ldots;x_n,U_n)=1 \,|\, \tau_{n-1}(x_1,U_1;x_2,U_2;\ldots;x_{n-1},U_{n-1})=0].
\end{split}
\end{equation}
The factors in \eqref{eq: pi} involving stopping functions do not depend on $p$. Thus $\pi(x_1,\ldots,x_n)$ is a polynomial in $p$, and so is $\varphi_n(p)$. This implies that $\Phi_n(p) = 1-\sum_{i=1}^{n-1} \varphi_i(p)$ is also a polynomial, and thus a continuous function of $p$. Therefore, taking into account that
\begin{equation}
\label{eq: E N series 1}
\E[N] = \sum_{n=1}^\infty \Phi_n(p),
\end{equation}
to establish the finiteness of $\E[N]$ and its continuity as a function of $p \in S$ it suffices to show that the above series converges uniformly on any interval $[\zeta,\eta] \subset S$.

Consider $\zeta, \eta>0$ arbitrary such that $[\zeta,\eta] \subset S$. By assumption $\mathcal B$ is fast, that is, $\Phi_{n+1}(p)$ satisfies a bound of the form \eqref{eq: exp tail}. According to \cite[proposition 21]{Nacu05}, this bound can be made uniform on $[\zeta,\eta]$. Thus, there exist $A$ and $\beta$ independent of $p$ such that $\Phi_n(p) \leq A\beta^{n-1}$ for all $p \in [\zeta,\eta]$. This implies that, given $t \in \mathbb N$,
\begin{equation}
\label{eq: E N series 2}
\sum_{n=t}^\infty \Phi_n(p) \leq A \sum_{n=t}^\infty \beta^{n-1} = \frac{A\beta^{t-1}}{1-\beta}
\end{equation}
for all $p \in [\zeta,\eta]$. Since $\beta < 1$, the bound \eqref{eq: E N series 2} can be made as small as desired by choosing $t$ large enough, which shows that the series \eqref{eq: E N series 1} converges uniformly to $\E[N]$ on $[\zeta,\eta]$. This implies \cite[theorem 2.11]{Fleming77} that $\E[N]$ is a continuous function of $p$ on that interval. Since $\zeta$ and $\eta$ are arbitrary, $\E[N]$ is continuous on $S$.
\qed

\subsection{Proof of Theorem \ref{teo: E[N] bound}}

Let $\secz$, $\mathcal B'$, $\lambda_n(z_1,\ldots,z_n;p)$, $\lambda(z;p)$, $R_n$, $R_{n,y}$, $\varphi_n(p)$ and $\varphi_{n,y}(p)$ be defined as in the proof of Proposition \ref{prop: E[N] cont p}.
% O poner un apartado "Definitions for the proofs of Proposition \ref{prop: E[N] cont p} and \ref{teo: E[N] bound}. Creo que no

Applying the sequential procedure $\mathcal B'$ to inputs taken from $\secz$ with parameter $p$ produces a Bernoulli variable $Y$ with parameter $f(p)$. The key idea of the proof is to consider $Y$ as an estimator of $f(p)$. Namely, the variance of $Y$ is 
\begin{equation}
\label{eq: var Y}
\Var[Y] = f(p)(1-f(p)).
\end{equation}
Substituting this into inequality \eqref{eq: C-R} from Lemma \ref{lem: C-R}, to be proved below, will give \eqref{eq: E[N] bound}. This lemma, in turn, uses the result in Lemma \ref{lem: conv unif der}.

\begin{lemma}
\label{lem: conv unif der}
Under the hypotheses of Theorem \ref{teo: E[N] bound}, the series $\sum_{n=1}^\infty \partial \varphi_{n,1}(p)/\partial p$ converges uniformly on any interval $[\zeta,\eta] \subseteq S$.
\end{lemma}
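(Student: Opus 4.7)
The plan is to differentiate $\varphi_{n,1}(p)$ under the integral sign and then dominate the derivative by an exponentially decaying sequence (multiplied by $n$), so that the Weierstrass M-test delivers uniform convergence. Starting from \eqref{eq: varphi n 2} and applying \eqref{eq: der lambda n}, I would write
\begin{equation}
\frac{\partial \varphi_{n,1}(p)}{\partial p} = \int_{R_{n,1}} \frac{r-np}{p(1-p)}\, \lambda_n(z_1,\ldots,z_n;p)\, \mathrm d z_1 \cdots \mathrm d z_n,
\end{equation}
where $r = \lfloor z_1\rfloor + \cdots + \lfloor z_n\rfloor$. The interchange of differentiation and integration is legitimate on any closed subinterval $[\zeta,\eta]\subseteq S$: the region $R_{n,1}$ is contained in the bounded set $(0,2)^n$, and the integrand is continuous and uniformly bounded in $p$ on $[\zeta,\eta]$, so dominated convergence applies.

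Next I would use the crude but sufficient estimate $|r-np|\leq n$, valid because $0\leq r\leq n$ and $p\in(0,1)$. This yields
\begin{equation}
\left|\frac{\partial \varphi_{n,1}(p)}{\partial p}\right| \leq \frac{n}{p(1-p)}\, \varphi_{n,1}(p) \leq \frac{n\,\Phi_n(p)}{p(1-p)}.
\end{equation}
Because the factory is assumed to be fast, \cite[proposition 21]{Nacu05} supplies constants $A>0$ and $\beta\in(0,1)$, both independent of $p\in[\zeta,\eta]$, such that $\Phi_n(p)\leq A\beta^n$ throughout the interval. Therefore
\begin{equation}
\sup_{p\in[\zeta,\eta]}\left|\frac{\partial \varphi_{n,1}(p)}{\partial p}\right| \leq \frac{A\, n\, \beta^n}{\zeta(1-\eta)}.
\end{equation}
Since $\sum_{n=1}^\infty n\beta^n$ converges, the Weierstrass M-test then gives the uniform convergence of $\sum_{n=1}^\infty \partial \varphi_{n,1}(p)/\partial p$ on $[\zeta,\eta]$.

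The only point that might look delicate is the differentiation under the integral sign, but this is essentially mechanical here: on each compact subinterval of $S$ the density $\lambda_n(\cdot;p) = p^r(1-p)^{n-r}$ and its $p$-derivative are jointly continuous in $(z_1,\ldots,z_n,p)$ and uniformly bounded, and the domain $R_{n,1}$ has finite Lebesgue measure. Everything else reduces to the elementary inequalities $|r-np|\leq n$, $\varphi_{n,1}\leq \Phi_n$, and the uniform exponential tail bound inherited from Nacu and Peres.
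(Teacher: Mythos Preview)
Your argument is correct and, in fact, slightly more direct than the paper's. One small inaccuracy: in your closing paragraph you assert that $\lambda_n(\cdot;p)$ and its $p$-derivative are \emph{jointly continuous} in $(z_1,\ldots,z_n,p)$. That is not true, because $r=\sum_i\lfloor z_i\rfloor$ jumps whenever some $z_i$ crosses $1$; the paper handles this by splitting $R_{n,1}$ into the $2^n$ hypercubes $H_{x_1,\ldots,x_n}$ on which $r$ is constant. However, this slip is harmless for your purposes: what you actually need for differentiation under the integral sign is measurability together with a $p$-uniform integrable dominant, and your bound $|\partial_p\lambda_n|\le n/(\zeta(1-\eta))$ on a set of finite measure provides exactly that. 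Your first justification (``continuous and uniformly bounded in $p$'') is the right one; just drop the joint-continuity claim.

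On the comparison: the paper bounds $|\partial_p\varphi_{n,1}|$ by $n\varphi_n(p)/(\zeta(1-\eta))$, then invokes Proposition~\ref{prop: E[N] cont p} (continuity of $\E[N]$) together with Dini's theorem to get uniform convergence of $\sum_n n\varphi_n(p)$, and finally applies a dominated uniform convergence theorem. You instead bound by $n\Phi_n(p)/(\zeta(1-\eta))$, pull the uniform exponential tail $\Phi_n\le A\beta^n$ directly from \cite[proposition~21]{Nacu05}, and finish with the Weierstrass $M$-test. Both routes ultimately rest on the same uniform-tail result of Nacu and Peres (the paper uses it inside the proof of Proposition~\ref{prop: E[N] cont p}), but your path avoids the detour through Dini's theorem and the dominated uniform convergence theorem, at the modest cost of the cruder inequality $\varphi_{n,1}\le\Phi_n$ in place of $\varphi_{n,1}\le\varphi_n$.
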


\begin{proof}
Consider an arbitrary interval $[\zeta,\eta] \subseteq S$. It will be shown first that differentiation under the integral sign is possible in \eqref{eq: varphi n 2} for $p \in [\zeta,\eta]$. This requires checking certain regularity conditions, so that Leibniz's rule can be applied. To this end, consider an open interval $(\zeta',\eta')$ such that $[\zeta,\eta] \subset (\zeta',\eta') \subset S$, with $\zeta'>0$, $\eta'<1$. To ensure the validity of Leibniz's rule it suffices to show that the subintegral function and its derivative are continuous and bounded for $(z_1,\ldots,z_n) \in R_{n,1}$, $p \in (\zeta',\eta')$ \cite[page~237]{Fleming77}.

From \eqref{eq: der lambda n} it is seen that $\partial \lambda_n(z_1,\ldots,z_n;p) /\partial p$ has a discontinuity when any $z_i$ approaches $1$ (which causes a jump in $r$), but is continuous for $(z_1,\ldots,z_n)$ within each of the $2^n$ hypercubes $(x_1,x_1+1) \times \cdots \times (x_n,x_n+1)$, $x_i \in \{0,1\}$, $i = 1,\ldots,n$. These hypercubes will be denoted as $H_{x_1,\ldots,x_n}$. In order to differentiate under the integral sign in \eqref{eq: varphi n 2}, the region $R_{n,1}$ needs to be divided into $2^n$ sets resulting from the intersection of $R_{n,1}$ with one of the hypercubes $H_{x_1,\ldots,x_n}$. The resulting sets are disjoint and measurable with respect to Lebesgue measure. The integral in \eqref{eq: varphi n 2} can thus be expressed as
\begin{equation}
\label{eq: varphi n partes}
\varphi_{n,1}(p) = \sum_{x_1,\ldots,x_n} \int_{R_{n,1} \cap H_{x_1,\ldots,x_n}} \lambda_n(z_1,\ldots,z_n;p) \, \mathrm d z_1 \cdots \mathrm d z_n.
\end{equation}
Within each of the $2^n$ integration regions in \eqref{eq: varphi n partes} and for $p \in (\zeta',\eta')$, the function $\partial \lambda_n(z_1,\ldots,z_n;p) / \partial p$ is bounded and continuous, because the region is contained in a single $H_{x_1,\dots,x_n}$ and $p$ is in $(\zeta',\eta')$ (thus $r$ does not have any jumps and $p$ is bounded away from $0$ or $1$). The function $\lambda_n(z_1,\ldots,z_n;p)$ is bounded and continuous too. Therefore Leibniz's rule can be applied to each integral in \eqref{eq: varphi n partes}; that is,
\begin{equation}
\label{eq: der varphi n parts}
\frac{\partial \varphi_{n,1}(p)} {\partial p} = \sum_{x_1,\ldots,x_n} \int_{R_{n,1} \cap H_{x_1,\ldots,x_n}}  \frac{\partial \lambda_n(z_1,\ldots,z_n;p)}{\partial p} \, \mathrm d z_1 \cdots \mathrm d z_n
\end{equation}
and $\partial \varphi_{n,1}(p)/\partial p$ is a continuous function of $p \in [\zeta,\eta]$.

The uniform convergence of $\sum_{n=1}^\infty \partial \varphi_{n,1}(p)/\partial p$ on $[\zeta, \eta]$ is easily obtained using \eqref{eq: der lambda n} and \eqref{eq: der varphi n parts} as follows. The term $r-np$ in \eqref{eq: der lambda n} can be bounded as $|r-np| < n$. In addition, for $p \in [\zeta, \eta]$ the term $p(1-p)$ is lower-bounded by $\zeta(1-\eta)$. Therefore
\begin{equation}
\label{eq: der lambda n bound}
\left| \frac{\partial \lambda_n(z_1,\ldots,z_n;p)}{\partial p} \right| \leq \frac{|r-np|}{p(1-p)} \lambda_n(z_1,\ldots,z_n;p) < \frac{n}{\zeta(1-\eta)} \lambda_n(z_1,\ldots,z_n;p).
\end{equation}
Using \eqref{eq: der varphi n parts} and \eqref{eq: der lambda n bound} and taking into account \eqref{eq: varphi n 2},
\begin{equation}
\label{eq: der phi cota}
\begin{split}
\left| \frac{\partial \varphi_{n,1}(p)} {\partial p} \right| & <
\sum_{x_1,\ldots,x_n} \int_{R_{n,1} \cap H_{x_1,\ldots,x_n}}  \left| \frac{\partial \lambda_n(z_1,\ldots,z_n;p)}{\partial p} \right| \, \mathrm d z_1 \cdots \mathrm d z_n \\
& \leq \frac{n}{\zeta(1-\eta)} \sum_{x_1,\ldots,x_n} \int_{R_{n,1} \cap H_{x_1,\ldots,x_n}} \lambda_n(z_1,\ldots,z_n;p) \, \mathrm d z_1 \cdots \mathrm d z_n \\
& = \frac{n}{\zeta(1-\eta)} \int_{R_{n,1}}  \lambda_n(z_1,\ldots,z_n;p) \, \mathrm d z_1 \cdots \mathrm d z_n = \frac{n \varphi_{n,1}(p)}{\zeta(1-\eta)} \leq \frac{n \varphi_n(p)}{\zeta(1-\eta)}.
\end{split}
\end{equation}

From Proposition \ref{prop: E[N] cont p}, $\E[N] = \sum_{n=1}^\infty n \varphi_n(p)$ is a continuous function of $p$ on $[\zeta,\eta]$, and then by Dini's theorem
% Dini's theorem requires the domain to be compact; that's why we choose a closed interval
\cite[page 29]{Hirsch99} the series $\sum_{n=1}^\infty n \varphi_n(p)$ converges uniformly on that interval. By the dominated uniform convergence theorem \cite[theorem 5.9]{Bressoud07}, the bound \eqref{eq: der phi cota} then implies that $\sum_{n=1}^\infty \partial \varphi_{n,1}(p)/\partial p$ converges uniformly on $[\zeta,\eta]$.
\end{proof}

\begin{lemma}
\label{lem: C-R}
Under the hypotheses of Theorem \ref{teo: E[N] bound}, the variable $Y$ produced by the Bernoulli factory satisfies the following (sequential Cram\'er-Rao) bound for $p \in S$:
\begin{equation}
\label{eq: C-R}
\Var[Y] \geq \frac{(f'(p))^2 p(1-p)}{\E[N]},
\end{equation}
where $\E[N]$ is the average number of inputs used for producing $Y$.
\end{lemma}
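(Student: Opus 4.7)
The plan is to apply Wolfowitz's sequential Cram\'er--Rao inequality \cite{Wolfowitz47} by regarding $Y$ as an estimator of the parameter $f(p)$. The starting point is the identity $f(p) = \E[Y] = \sum_{n=1}^\infty \varphi_{n,1}(p)$, obtained from \eqref{eq: varphi n 2}. By Lemma \ref{lem: conv unif der}, the series of term-by-term derivatives $\sum_n \partial \varphi_{n,1}(p)/\partial p$ converges uniformly on every compact subinterval of $S$, which justifies differentiating under both the sum and the integral sign. Using \eqref{eq: der varphi n parts} together with the identity $\partial_p \lambda_n = (r - np)/(p(1-p)) \cdot \lambda_n$ from \eqref{eq: der lambda n}, this yields
\begin{equation}
f'(p) = \frac{1}{p(1-p)} \, \E\!\left[Y(R_N - Np)\right],
\end{equation}
where $R_N = \sum_{i=1}^N \lfloor Z_i \rfloor = \sum_{i=1}^N X_i$ is the number of $1$-valued inputs consumed by the factory.

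Next I would invoke Wald's first and second identities for the martingale $M_n = \sum_{i=1}^n (X_i - p)$ evaluated at the stopping time $N$. Since the $X_i$ are i.i.d.\ Bernoulli$(p)$ and independent of $\secu$, and $N$ is a stopping time with respect to $\mathcal{F}_n = \sigma(X_1, U_1, \ldots, X_n, U_n)$, the increments $X_i - p$ form a bounded martingale difference sequence. Fastness of the Bernoulli factory together with Proposition \ref{prop: E[N] cont p} gives $\E[N] < \infty$, so the optional stopping theorem yields $\E[R_N - Np] = 0$ and $\E[(R_N - Np)^2] = p(1-p) \E[N]$. The zero-mean property allows the previous display to be rewritten as a covariance, $f'(p) = \operatorname{Cov}\bigl(Y,\, (R_N - Np)/(p(1-p))\bigr)$, and the Cauchy--Schwarz inequality gives
\begin{equation}
\bigl(f'(p)\bigr)^2 \leq \Var[Y] \cdot \frac{\E[(R_N - Np)^2]}{\bigl(p(1-p)\bigr)^2} = \Var[Y] \cdot \frac{\E[N]}{p(1-p)},
\end{equation}
which upon rearrangement is \eqref{eq: C-R}.

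The main technical obstacle is the interchange of differentiation with the infinite sum and with the integrals over the (possibly complicated) stopping regions $R_{n,1}$, which is exactly what Lemma \ref{lem: conv unif der} has already secured; a secondary point to check is that Wald's identities apply, which relies on $\E[N]<\infty$ from Proposition \ref{prop: E[N] cont p} and on boundedness of the increments $X_i-p$. Once those pieces are in place, the remainder of the argument follows the standard Wolfowitz chain: express the derivative of the mean of $Y$ as a covariance between $Y$ and the (integrated) score at the random stopping time, bound it via Cauchy--Schwarz, and evaluate the variance of the score via Wald's identities.
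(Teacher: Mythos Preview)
Your argument is correct, and it takes a somewhat different route from the paper's. The paper verifies the five regularity conditions of Wolfowitz \cite{Wolfowitz47} one by one (the fifth being precisely Lemma~\ref{lem: conv unif der}) and then invokes his inequality \cite[equation~(4.5)]{Wolfowitz47} as a black box, substituting $\E[(\partial_p\log\lambda)^2]=1/(p(1-p))$ and $\mathrm d\E[Y]/\mathrm dp=f'(p)$. You instead reprove the sequential Cram\'er--Rao bound in this concrete setting: differentiate $\E[Y]=\sum_n\varphi_{n,1}(p)$ term by term (justified by Lemma~\ref{lem: conv unif der} and \eqref{eq: der varphi n parts}) to obtain the score representation $f'(p)=\E[Y(R_N-Np)]/(p(1-p))$, then use Wald's first and second identities for the bounded martingale differences $X_i-p$ at the stopping time $N$ (legitimate since $\E[N]<\infty$ by Proposition~\ref{prop: E[N] cont p}) to turn this into a covariance and bound it by Cauchy--Schwarz. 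Both routes rest on the same analytic core, namely Lemma~\ref{lem: conv unif der}; your version is more self-contained and makes the probabilistic mechanism (score $=$ centred count, variance via Wald) explicit, while the paper's version is shorter at the cost of delegating the argument to \cite{Wolfowitz47}.
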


\begin{proof}
It suffices to prove that, given an arbitrary interval $(\zeta,\eta) \subset S$ with $\zeta>0$, $\eta<1$, \eqref{eq: C-R} holds for all $p \in (\zeta,\eta)$. This will be done using Wolfowitz's extension of the Cram\'er-Rao bound to sequential estimators \cite{Wolfowitz47}, which particularized to $\mathcal B'$ and $Y$ will give the desired result.

% From proposition \ref{prop: E[N] cont p} it stems that $\E[N]$ is finite.
Consider $(\zeta,\eta) \subset S$, $\zeta>0$, $\eta<1$. The sequential version of the Cram\'er-Rao bound will hold on this interval if the five regularity conditions enunciated in \cite[section 3]{Wolfowitz47} are satisfied. The first condition specifies that $p$ must belong to an open interval, which is indeed the case.

The second regularity condition requires that $\partial \lambda(z;p)/\partial p$ exist for all $p$ and almost all $z$, and that $\E[\partial \log \lambda(Z;p)/\partial p] = 0$ and $\E[(\partial \log \lambda(Z;p)/\partial p)^2] > 0$ for all $p \in (\zeta,\eta)$, where $Z$ is a generic variable from the sequence $\secz$. This easily follows from \eqref{eq: lambda} by computing
\begin{equation}
\frac{\partial \lambda(z;p)}{\partial p} = \begin{cases}
-1 & \text{if } z\in(0,1) \\
1 & \text{if } z\in(1,2).
\end{cases}
\end{equation}
and
\begin{equation}
\label{eq: der log lambda}
\frac{\partial \log \lambda(z;p)}{\partial p} = \begin{cases}
-1/(1-p) & \text{if } z\in(0,1) \\
1/p & \text{if } z\in(1,2).
\end{cases}
\end{equation}
Taking into account that $\Pr[Z \in (0,1)] = 1-p$ and $\Pr[Z \in (1,2)] = p$, from \eqref{eq: der log lambda} it stems that $\E[ \partial \log \lambda(z;p) / \partial p] = 0$ and
\begin{equation}
\label{eq: E der log lambda squared}
\E\left[ \left( \frac{\partial \log \lambda(z;p)} {\partial p} \right)^2 \right] = \frac 1 {1-p} + \frac 1 {p} = \frac 1 {p(1-p)},
\end{equation}
which is strictly positive as required.

The third condition requires that, for $n \in \mathbb N$ and for variables $Z_1, \ldots, Z_n$ belonging to sequence $\secz$,
\begin{equation}
\E\left[ \left( \sum_{j=1}^n \left| \frac{\partial \log \lambda(Z_j;p)} {\partial p} \right| \right)^2 \right]
\end{equation}
exists for all $p \in (\eta,\zeta)$. This is satisfied because, according to \eqref{eq: der log lambda}, $|\partial \log \lambda(Z_j;p)/\partial p|$ is upper-bounded by $\max\{1/\zeta, 1/(1-\eta)\}$ for $p \in (\zeta,\eta)$.

The fourth condition states that $|\gamma'_n(z_1,\ldots,z_n) \partial \lambda_n(z_1,\ldots,z_n;p)/\partial p|$ be bounded by a measurable function of $z_1,\ldots,z_n$ with finite integral on $R_n$. This clearly holds because $|\gamma'_n(z_1\ldots,z_n)| \leq 1$ and, for $p \in (\zeta,\eta)$, \eqref{eq: der lambda n} implies
\begin{equation}
\left| \frac{\partial \lambda_n(z_1,\ldots,z_n;p)} {\partial p} \right| < \frac{n}{p(1-p)} < \frac{n}{\zeta(1-\eta)}.
\end{equation}

Lastly, the fifth regularity condition postulates the uniform convergence of the series $\sum_{n=1}^\infty \partial \varphi_{n,1}(p)/\partial p$ on the interval $(\zeta,\eta)$. This is established by Lemma \ref{lem: conv unif der}.

Since the regularity conditions are satisfied, Wolfowitz's single-parameter inequality \cite[equation (4.5)]{Wolfowitz47} holds for $p \in (\zeta,\eta)$, namely
\begin{equation}
\label{eq: from Wolfowitz}
\Var[Y] \geq \frac{\left( {\mathrm d \E[Y]} / {\mathrm d p} \right)^2} {\E[N] \E\left[ \left( {\partial \log \lambda(z;p)} / {\partial p} \right)^2 \right]}.
\end{equation}
The estimator $Y$ is unbiased, that is, $\E[Y] = f(p)$. Thus
\begin{equation}
\label{eq: der E Y}
\frac{\mathrm d \E[Y]}{\mathrm d p} = f'(p).
\end{equation}
Substituting \eqref{eq: E der log lambda squared} and \eqref{eq: der E Y} into \eqref{eq: from Wolfowitz} yields \eqref{eq: C-R}. Since the interval $(\zeta,\eta) \subseteq S$ is arbitrary, this holds for any $p \in S$.
\end{proof}

As indicated, the result in Theorem \ref{teo: E[N] bound} readily follows from \eqref{eq: var Y} and Lemma \ref{lem: C-R}.
\qed

\subsection{Proof of Proposition \ref{prop: ex}}
% Each function is considered separately in the following.

\paragraph{1} $f(p) = p^a$, $a \in (0,1)$ can be written as in \eqref{eq: f 1-p} with $c_k$ given by \eqref{eq: coef p a}. These coefficients clearly satisfy \eqref{eq: c k non-neg}, and \eqref{eq: sum c k} also holds because $\lim_{p \rightarrow 0}f(p) = 0 = 1- \sum_{k=1}^\infty c_k$. Condition \eqref{eq: cond f a b}, which implies \eqref{eq: cond f(p)} and \eqref{eq: cond f'(p)},  is obviously satisfied with the same $a$ as in the function definition and $b=1$.

\paragraph{2} For $f$ given by \eqref{eq: ex frac sqrt},
\begin{equation}
\label{eq: prop ex 2 1}
1-f(1-p) = 1 - \frac{2 \sqrt{1-p}}{1+\sqrt{1-p}} = \frac{1-\sqrt{1-p}}{1+\sqrt{1-p}} = 2 \frac{1-\sqrt{1-p}} p - 1.
\end{equation}
Consider the series expansion of $\sqrt p$ and let $c'_k$ denote its coefficients: $1-\sqrt{1-p} = \sum_{k=1}^\infty c'_k p^k$, where, according to \eqref{eq: c_k sqrt}, $c'_1 = 1/2$. Substituting into \eqref{eq: prop ex 2 1}, 
\begin{equation}
1-f(1-p) = 2 \sum_{k=1}^\infty c'_k p^{k-1} - 1 =  2\sum_{k=1}^\infty c'_{k+1} p^k.
\end{equation}
Thus $f(p)$ can be expressed as in \eqref{eq: f 1-p} with coefficients $c_k = 2c'_{k+1}$, which are positive and sum to $1$. On the other hand, condition \eqref{eq: cond f a b} is satisfied with $a=1/2$, $b=2$.

\paragraph{3} From \cite[equation 4.6.32]{Abramowitz70},
\begin{equation}
\begin{split}
\arcsech z &= \log \left(\frac 1 z + \sqrt{\frac 1 {z^2} -1} \right) = \log\frac 2 z - \sum_{k=1}^\infty \frac{(2k-1)!!}{(2k)!!}\frac{z^{2k}}{2k} \\
&= \log\frac 2 z - \sum_{k=1}^\infty \binom{2k}{k} \frac{z^{2k}}{2^{2k+1} k} \quad \text{for } |z|<1.
\end{split}
\end{equation}
Substituting $z^2 = 1-p$ and rearranging,
\begin{equation}
\log(1+\sqrt p) = \log 2 - \sum_{k=1}^\infty \binom{2k}{k} \frac{(1-p)^k}{2^{2k+1} k} \quad \text{for } p \in (0,1).
\end{equation}
Therefore
\begin{equation}
\label{eq: prop ex 3 1}
\log_2(1+\sqrt p) = 1 - \sum_{k=1}^\infty \binom{2k}{k} \frac{(1-p)^k}{2^{2k+1} k \log 2} \quad \text{for } p \in (0,1).
\end{equation}
Comparing with \eqref{eq: f 1-p}, the coefficients $c_k$ for this function are seen to be positive. Since $\lim_{p \rightarrow 0} \log_2(1+\sqrt p) = 0$, the coefficients sum to $1$. Lastly, $\lim_{p \rightarrow 0} \log_2(1+\sqrt p) / \sqrt p$ is easily seen to be $\log_2 e$, and thus \eqref{eq: cond f a b} holds.

\paragraph{4} From \cite[pages 41 and 42]{Grosswald78} (see also \cite{OEIS-A144301}),
\begin{equation}
\label{eq: prop ex 4 1}
e^{(1-\sqrt{1-2zt})/z} = \sum_{k=0}^\infty \frac{y_{k-1}(z)}{k!} t^k
\end{equation}
where $y_j(z)$ are the Bessel polynomials, which satisfy \cite[pages 18 and 20]{Grosswald78}
\begin{equation}
\label{eq: prop ex 4 2}
y_{-1}(z) = y_0(z) = 1, \qquad y_{j}(z) = (2j-1)zy_{j-1}(z) + y_{j-2}(z).
\end{equation}
Setting $z=1$ and substituting $t = p/2$ in \eqref{eq: prop ex 4 1}, and then using \eqref{eq: prop ex 4 2} gives
\begin{equation}
e^{1-\sqrt{1-p}} = 1 + \sum_{k=1}^\infty \frac{y_{k-1}(1)}{2^k\,k!} p^k.
\end{equation}
This implies that
\begin{equation}
\label{eq: prop ex 4 3}
f(p) = \frac{1-e^{-\sqrt{p}}}{1-e^{-1}} = 1 - \sum_{k=1}^\infty \frac{y_{k-1}(1)}{(e-1)2^k\,k!} (1-p)^k.
\end{equation}
Comparing with \eqref{eq: f 1-p} and taking into account \eqref{eq: prop ex 4 2}, the coefficients $c_k$ are seen to be positive. Their sum is $1$ because $\lim_{p \rightarrow 0} f(p) = 0$; and \eqref{eq: cond f a b} holds because, as is easily seen, $\lim_{p \rightarrow 0} f(p)/\sqrt p = e/(e-1)$.

\paragraph{5} Consider the series expansion \cite[equation 4.1.26]{Abramowitz70}
\begin{equation}
\label{eq: log p series}
\log p = -\sum_{k=1}^\infty \frac{(1-p)^k}{k} \quad \text{ for } p \in (0,1).
\end{equation}
Multiplying by $1-p$ in \eqref{eq: log p series}, subtracting \eqref{eq: log p series}, collecting same-power terms and rearranging gives
\begin{equation}
\label{eq: p - p log p series}
f(p) = p-p\log p = 1 - \sum_{k=2}^\infty \frac{1}{k(k-1)} (1-p)^k \quad \text{ for } p \in (0,1).
\end{equation}
Identifying terms in \eqref{eq: f 1-p} and \eqref{eq: p - p log p series}, it is clear that \eqref{eq: c k non-neg} is satisfied. Since $\lim_{p \rightarrow 0} f(p) = 0$, \eqref{eq: sum c k} holds as well.

Clearly $\lim_{p \rightarrow 0} f(p)/p = \lim_{p \rightarrow 0} (1-\log p) = \infty$, which establishes \eqref{eq: cond f(p)}. On the other hand, the derivative of $f(p)$ is $f'(p) =- \log p$. Thus
\begin{equation}
\lim_{p \rightarrow 0} \frac{f'(p) p}{f(p)} = \lim_{p \rightarrow 0} \frac{\log p}{\log p -1} = 1,
\end{equation}
and therefore \eqref{eq: cond f'(p)} holds.
\qed

\subsection{Proof of Theorem \ref{teo: E[N] asympt opt}}

The proof uses a standard argument based on the definition of $\Omega$-type asymptotic growth and on Theorem \ref{teo: E[N] bound}.

By assumption \eqref{eq: cond f'(p)}, there exist $C, \delta >0$ such that
\begin{equation}
\label{eq: teo E[N] asympt opt proof 1}
f'(p) \geq C {f(p)} / {p} \quad \text{ for } p \in S \cap (0,\delta).
\end{equation}
The function $f$ satisfies the hypotheses of Theorem \ref{teo: E[N] bound}. Substituting \eqref{eq: teo E[N] asympt opt proof 1} into \eqref{eq: E[N] bound},
\begin{equation}
\label{eq: teo E[N] asympt opt proof 2}
\E[N] \geq \frac{C^2 f(p)(1-p)}{p(1-f(p))}  \quad \text{ for } p \in S \cap (0,\delta).
\end{equation}

The claimed result is equivalent to the statement that there are $C', \delta'>0$ such that $\E[N] > C' f(p)/p$ for all $p \in S \cap (0,\delta')$. Taking $C' = C^2/2$, $\delta' = \min\{\delta, 1/2\}$, from \eqref{eq: teo E[N] asympt opt proof 2} it stems that
\begin{equation}
\label{eq: teo E[N] asympt opt proof 3}
\E[N] \geq \frac{2C' f(p)(1-p)}{p(1-f(p))} > \frac{C' f(p)}{p} \quad \text{ for } p \in S \cap (0,\delta'),
\end{equation}
which establishes the result.
\qed

\subsection{Proof of Theorem \ref{teo: algo non-rand E[N], fast}}

The following result about geometric random variables will be used.

\begin{lemma}
\label{lem: geom exp tail}
Geometric random variables have exponential tails.
\end{lemma}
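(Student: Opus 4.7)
The plan is to directly compute the tail probability of a geometric random variable and observe that it already has the exponential form required by the definition in \eqref{eq: exp tail}. Let $G$ be a geometric random variable with success probability $q \in (0,1)$, taking values in $\mathbb N$ under the convention that $\Pr[G=n] = (1-q)^{n-1} q$ (the proof for the other common convention, values in $\mathbb N \cup \{0\}$, is analogous). Summing the geometric series gives
\begin{equation}
\Pr[G > n] = \sum_{k=n+1}^\infty (1-q)^{k-1} q = (1-q)^n.
\end{equation}
Taking $A=1$ and $\beta = 1-q \in (0,1)$, this is exactly the bound $\Pr[G>n] \leq A\beta^n$ required by the fastness condition \eqref{eq: exp tail}.

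There is essentially no obstacle here: the lemma is a one-line computation, and it is stated separately only because it will be invoked in the proof of Theorem \ref{teo: algo non-rand E[N], fast}, where the number of inputs needed to produce each $V_i$ via von Neumann's rejection sampling of pairs (step \ref{step: algo non-rand, loop}) is geometric, and its exponential tail has to be combined with the exponential tail of the underlying iteration index $i$ from Algorithm \ref{algo: Bernoulli factory rand} to establish \eqref{eq: exp tail} for Algorithm \ref{algo: Bernoulli factory non-rand}.
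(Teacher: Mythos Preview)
Your proof is correct and essentially identical to the paper's: both compute $\Pr[G>n]=(1-q)^n$ directly and take $A=1$, $\beta=1-q$. The additional paragraph explaining the lemma's role in Theorem \ref{teo: algo non-rand E[N], fast} is accurate context but not part of the proof proper.
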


\begin{proof}
Given $\theta \in (0,1)$, consider a random variable $W$ with $\Pr[W=w] = \theta(1-\theta)^{w-1}$, $w \in \mathbb N$. Computing $\Pr[W \geq w+1] = (1-\theta)^w$ shows that \eqref{eq: exp tail} is satisfied with $A=1$, $\beta = 1-\theta<1$.
\end{proof}

Steps $2$--$4$
% References cannot be used here, as these steps do not explicitly appear in the text
of Algorithm \ref{algo: Bernoulli factory non-rand} (which are the same as in Algorithm \ref{algo: Bernoulli factory rand} except step $3$) form an (outer) loop on $i$ which is repeated until the exit condition in step $4$ is met. For each iteration of this loop, Algorithm \ref{algo: Bernoulli factory rand} uses one input from $\secx$; whereas Algorithm \ref{algo: Bernoulli factory non-rand} uses that input plus additional ones that are needed for generating $V_i$, as specified by steps \ref{step: algo non-rand, start}--\ref{step: algo non-rand, branch}. % *!* --
For a given $i$, the number of iterations of the (inner) loop on $j$ formed by steps \ref{step: algo non-rand, loop} and \ref{step: algo non-rand, branch} is a geometric random variable $K_i$ with parameter $1/2$, and thus $\E[K_i] = 2$. For each $j$, the number of blocks of $2$ inputs required within step \ref{step: algo non-rand, loop} (which can be regarded as a third-level, innermost loop) is a geometric random variable $L_{i,j}$ with parameter $2p(1-p)$, and thus $\E[L_{i,j}] = 1/(2p(1-p))$. Let
\begin{equation}
L_i = L_{i,1}+\cdots+L_{i,K_i}.
\end{equation}
The variables $L_{i,j}$ are i.i.d.\ and independent from $K_i$, and therefore \cite[page 194]{Papoulis91} $\E[L_i] = \E[K_i]\E[L_{i,1}] = 1/(p(1-p))$. Thus, for each $i$ Algorithm \ref{algo: Bernoulli factory non-rand} uses one input from $\secx$ in step $2$ (like Algorithm \ref{algo: Bernoulli factory rand} does), plus $1/(p(1-p))$ $2$-input blocks on average in steps \ref{step: algo non-rand, start}--\ref{step: algo non-rand, branch}. Consequently, Algorithm \ref{algo: Bernoulli factory non-rand} uses on average $1+2/(p(1-p))$ as many inputs as Algorithm \ref{algo: Bernoulli factory rand} does. This proves \eqref{eq: E N, non-rand}.

By Lemma \ref{lem: geom exp tail}, the variables $L_{i,j}$ as well as $K_i$ have exponential tails; and then \cite[proposition 12]{Nacu05} guarantees that $L_i$ has an exponential tail. For each $i$, the iteration formed by steps $2$--$4$ of Algorithm \ref{algo: Bernoulli factory non-rand} requires $1 + 2 L_i$ inputs. The total number of iterations of the outer loop on $i$ coincides with the number of inputs of Algorithm \ref{algo: Bernoulli factory rand}, which has an exponential tail as established by Theorem \ref{teo: E[N], exp tail}. Since $1 + 2 L_i$ also has an exponential tail, applying \cite[proposition 12]{Nacu05} again shows that the total number of inputs used by Algorithm \ref{algo: Bernoulli factory non-rand} has an exponential tail, that is, satisfies \eqref{eq: exp tail}.
\qed

\subsection{Proof of Proposition \ref{prop: extensions}}

Consider functions $f_1(p) = 1-\sum_{i=1}^\infty c_{1,i} (1-p)^i$ and $f_2(p) = 1-\sum_{j=1}^\infty c_{2,j} (1-p)^j$ that satisfy \eqref{eq: f 1-p}--\eqref{eq: sum c k},
% *!*
and let $f$, $g$ and $h$ be defined as in \eqref{eq: comb of functions, start}--\eqref{eq: comb of functions, end}.

\paragraph{1. Function $f(p)=f_2(f_1(p))$} Identifying coefficients in
\begin{equation}
\label{eq: f comp}
f(p) = f_2(f_1(p)) = 1-\sum_{j=1}^\infty c_{2,j} \left(\sum_{i=1}^\infty c_{1,i} (1-p)^i \right)^j
= 1-\sum_{k=1}^\infty c_k (1-p)^k
\end{equation}
it is seen that $c_k \geq 0$ for $c_{1,i}, c_{2,j} \geq 0$. Also, $\lim_{p \rightarrow 0} f(p) = 0$ because $\lim_{p \rightarrow 0} f_1(p) = \lim_{p \rightarrow 0} f_2(p) = 0$; and thus $\sum_{k=1}^\infty c_k = 1$.

Assume that $f_1$ and $f_2$ satisfy \eqref{eq: cond f(p)}. Conditions \eqref{eq: f 1-p}--\eqref{eq: sum c k}
% *!*
imply that $f_1$ has an inverse function, such that $p = f_1^{-1}(s)$, and that $p \rightarrow 0$ if and only if $s \rightarrow 0$. Therefore, condition \eqref{eq: cond f(p)} for $f_1$ can be written as
\begin{equation}
\label{eq: lim p s 1}
\lim_{p \rightarrow 0} \frac{f_1(p)}{p} = \lim_{s \rightarrow 0} \frac{s}{f_1^{-1}(s)} = \infty.
\end{equation}
Thus
\begin{equation}
\label{eq: lim p s 2}
\lim_{p \rightarrow 0} \frac{f(p)}{p} = \lim_{p \rightarrow 0} \frac{f_2(f_1(p))}{p} = \lim_{s \rightarrow 0} \frac{f_2(s)}{f_1^{-1}(s)} = \lim_{s \rightarrow 0} \frac{f_2(s)}{s} \frac{s}{f_1^{-1}(s)}.
\end{equation}
Since $f_2$ also satisfies \eqref{eq: cond f(p)}, it follows that $\lim_{p \rightarrow 0} f(p)/p = \infty$, which establishes condition \eqref{eq: cond f(p)} for $f$.

Assuming that \eqref{eq: cond f'(p)} holds for $f_1$ and $f_2$, to prove that it holds for $f$ it suffices to find $C, \delta > 0$ such that $f'(p) > C f(p)/p$ for all $p < \delta$. The derivative of $f$ is
\begin{equation}
f'(p) = f'_2(f_1(p)) f'_1(p).
\end{equation}
Since $f_1$ and $f_2$ satisfy \eqref{eq: cond f'(p)}, there exist $C_1, C_2, \delta_1, \delta_2 > 0$ such that
\begin{equation}
f'(p) \geq C_2 \frac{f_2(f_1(p))}{f_1(p)} C_1 \frac{f_1(p)}{p} = C_1 C_2 \frac{f_2(f_1(p))}{p}.
\end{equation}
for all $p<\min\{\delta_1, f_1^{-1}(\delta_2)\}$. Taking $C = C_1 C_2$ and $\delta = \min\{\delta_1, f_1^{-1}(\delta_2)\}$ establishes the desired result.

\paragraph{2. Function $g(p) = 1-(1-f_1(p)) (1-f_2(p))$} Writing
\begin{equation}
\label{eq: f prod}
g(p) = 1-(1-f_1(p)) (1-f_2(p)) = 1-\sum_{i=1}^\infty \sum_{j=1}^\infty c_{1,i} c_{2,j} (1-p)^{i+j}
= 1- \sum_{k=1}^\infty c_k (1-p)^k
\end{equation}
and identifying coefficients makes it clear that $c_k \geq 0$. On the other hand, it is easily seen that $\lim_{p \rightarrow 0} f(p) = 0$, which implies that $\sum_{k=1}^\infty c_k = 1$.

The function $g$ can be expressed as
\begin{equation}
\label{eq: f1+f2-f1f2}
g(p) = f_1(p)+f_2(p)-f_1(p)f_2(p) = f_1(p)+f_2(p)(1-f_1(p)).
\end{equation}
Taking into account that $\lim_{p \rightarrow 0} f_1(p) = 0$, it follows from \eqref{eq: f1+f2-f1f2} that \eqref{eq: cond f(p)} holds for $g$ if it holds for $f_1$ and $f_2$.

From \eqref{eq: f1+f2-f1f2}, the derivative of $g$ is computed as
\begin{equation}
g'(p)
% = f'_1(p) + f'_2(p) - f'_1(p)f_2(p) - f_1(p)f'_2(p)
= f'_1(p) (1-f_2(p)) + f'_2(p) (1-f_1(p)).
\end{equation}
Assuming that $f_1$ and $f_2$ satisfy \eqref{eq: cond f'(p)} and taking into account that they are monotone increasing, it stems that there exist $C_1, C_2 > 0$,  $\delta \in (0,1/2)$ such that for all $p<\delta$
\begin{equation}
\label{eq: der prod ineq}
\begin{split}
g'(p) & \geq C_1 \frac{f_1(p)}{p} (1-f_2(p)) + C_2 \frac{f_2(p)}{p} (1-f_1(p)) \\
& > C_1 (1-f_2(1/2)) \frac{f_1(p)}{p} + C_2 (1-f_1(1/2)) \frac{f_2(p)}{p}.
\end{split}
\end{equation}
Defining $C = \min\{C_1(1-f_2(1/2)), C_2(1-f_1(1/2))\}$ and making use of \eqref{eq: f1+f2-f1f2} again, it stems from \eqref{eq: der prod ineq} that
\begin{equation}
g'(p) > C \frac{f_1(p)+f_2(p)}{p} > C\frac{g(p)}{p}
\end{equation}
for all $p < \delta$, which establishes \eqref{eq: cond f'(p)} for $g$.

\paragraph{3. Function $h(p) = \alpha f_1(p) + (1-\alpha) f_2(p)$} The proof is straightforward.
\qed

\section*{Acknowledgments}

The author wishes to thank professor Krzysztof \L{}atuszy{\'n}ski for his comments on an earlier version of the paper; Mats Granvik for his help with the Taylor expansion of \eqref{eq: ex exp sqrt}; and the anonymous reviewers for very helpful comments and suggestions.

%\section*{References}

\bibliographystyle{elsarticle-num}

\end{document}